\tikzset{cross/.style={cross out, draw, 
         minimum size=2*(#1-\pgflinewidth), 
         inner sep=0pt, outer sep=0pt}}
\DeclareMathSymbol{\lsb@l}{\mathalpha}{letters}{`l}
\newtheorem{theorem}{Theorem}[section]
\newtheorem{proposition}[theorem]{Proposition}
\newtheorem{lemma}[theorem]{Lemma}
\newtheorem{conjecture}[theorem]{Conjecture}
\theoremstyle{definition}
\newtheorem*{definition}{Definition}
\newtheorem*{example}{Example}
\newtheorem*{remark}{Remark}
\newcommand{\norm}[1]{\lVert#1\rVert}
\newcommand{\zhizhong}[1]{{\color{green} \sf $\clubsuit\clubsuit\clubsuit$ Zhizhong : [#1]}}
\title{Local distribution of rational points in flag varieties}
\author{Zhizhong Huang}
\address{State Key Laboratory of Mathematical Sciences, Academy of Mathematics and Systems Science, Chinese Academy of Sciences, Beijing 100190, China}
\email{zhizhong.huang@yahoo.com}
\author{Nicolas de Saxcé}
\address{CNRS -- Université Sorbonne Paris Nord,
	LAGA / UMR 7539,
	93430 Villetaneuse, France}
\email{desaxce@math.univ-paris13.fr}
\begin{document}

\begin{abstract}
Given a flag variety $X$ defined over $\mathbb{Q}$ and a point $x$ in $X(\mathbb{R})$, we study approximations to $x$ by points $v$ in $X(\mathbb{Q})$, and show that, with an appropriate rescaling, those approximations equidistribute when $x$ is chosen randomly according to a Lebesgue measure on $X(\mathbb{R})$, or when $x$ is defined over $\mathbb{Q}$ and satisfies some non-degeneracy condition.
\end{abstract}

\setcounter{tocdepth}{1}
\maketitle
\tableofcontents

\section{Introduction}

\subsection*{Counting rational points and equidistribution}

Let $X$ be an algebraic variety defined over $\mathbb{Q}$. The study of the set $X(\mathbb{Q})$ of rational points on $X$ is a central question in number theory that strongly depends on the geometric and arithmetic properties of $X$.  When $X$ is of Fano type and such that $X(\mathbb{Q})$ is Zariski dense, Manin and his collaborators \cite{Batyrev-Manin,fmt} put forward conjectures about the asymptotic behavior of the number of rational points with bounded anticanonical height as the bound goes to infinity. This gives a quantitative measure of the density of $X(\mathbb{Q})$.

Starting with the work of Franke--Manin--Tschinkel~\cite{fmt}, the case where $X$ is a generalized flag variety --- i.e. a quotient \( X=G/P \) of a semisimple algebraic \( \mathbb{Q} \)-group $G$ by a parabolic \( \mathbb{Q} \)-subgroup $P$ --- was much studied, and asymptotic behavior of the counting function
\[
N(X,H) = \#\{v\in X(\mathbb{Q})\ :\ \mathrm{H}(v)\leq H\},
\] when $\mathrm{H}$ is a height associated to the anticanonical line bundle, was shown to be $$N(X,H) \sim_{H\to+\infty} c_{X}H(\log H)^{r-1},$$ where $c_X>0$ is a constant that has received an interpretation by Peyre \cite{Peyre}, and $r$ is the rank of the Picard group $\operatorname{Pic}(X)$.
Furthermore, Peyre~\cite[Corollaire~6.2.17]{Peyre} showed that, in the case where $G$ is quasi-split, the counting measures
\begin{equation}\label{eq:counting}
\mu_{X,H} = \frac{1}{N(X,H)}\sum_{\substack{v\in X(\mathbb{Q})\\ \mathrm{H}(v)\leq H}} \delta_v
\end{equation}
converge as $H\to +\infty$ to a natural measure $\mu_X$ on $X(\mathbb{R})$, called \emph{Tamagawa measure}.
Peyre's equidistribution result implies in particular that if $\mathbf{B}_X$ is a fixed open ball in $X(\mathbb{R})$, the number $N(\mathbf{B}_X,H)$ of rational points of height at most $H$ inside $\mathbf{B}_X$ satisfies
\begin{equation}\label{eq:countbx}
N(\mathbf{B}_X,H) = N(X,H)\cdot \mu_X(\mathbf{B}_X) \cdot (1+o(1))
\quad\mbox{as } H \to +\infty,
\end{equation}
with an error term depending \emph{a priori} on $\mathbf{B}_X$.
More recently, it was shown by Mohammadi and Salehi Golsefidy \cite[Theorem~4]{msg} (see also Batyrev--Tschinkel \cite[Theorem 4.4.1]{Batyrev-Tschinkel} and Thunder \cite[III.2]{Thunder}) that if $G$ is almost $\mathbb{Q}$-simple and the height function $\mathrm{H}$ is associated to any ample line bundle $L$, there exist constants $a>0$, $b\geq 0$ and $c_{X,L}>0$ depending on $L$ such that
\[
N(X,H) \sim_{H\to+\infty} c_{X,L}\cdot H^a \cdot (\log H)^b.
\]

\subsection*{Zooming measures and Diophantine exponents}
We wish to study to what extent the asymptotic equivalent \eqref{eq:countbx} remains valid when the radius of the ball $\mathbf{B}_X$ is allowed to shrink as $H$ tends to infinity.
For that, we fix a  point $x$ in $X(\mathbb{R})$, fix a distance function $\mathrm{d}$ around $x$, and rescale the counting measures restricted to a small ball $\mathbf{B}_X(x,r)$ centered at $x\in X(\mathbb{R})$ of radius $r\to 0$ as $H\to\infty$; this yields the notion of \emph{zooming measures}, introduced by the first author in his thesis~\cite{Huangthesis}.
For the sake of completeness, let us briefly recall the construction of those zooming measures.
Write $T_xX$ for the tangent space to $X(\mathbb{R})$ at $x$.
We fix a real neighborhood $U_x$ of $x$ and a smooth map
\[
p_x\colon  U_x  \to  T_xX
\]
such that $p_x(x)=0$ and $T_xp_x = \mathrm{Id}_{T_xX}$.
%
\begin{definition}[Zooming measures -- single height and Riemannian metric]
For $x\in X(\mathbb{R})$, $\tau>0$, and $t\geq 0$ a large parameter, the \emph{zooming measure} with zoom factor $\tau$ at time $t$ is the measure on $T_xX$ defined by
\[
	\mu_{x,\tau,t} = \sum_{\substack{v\in X(\mathbb{Q})\cap U_x\\ \mathrm{H}(v)\leq e^t}} \delta_{e^{\tau t} \cdot p_x(v)}.
\]
\end{definition}
\begin{remark}
A different choice of local chart $p_x$ results in a change of coordinates of the zooming measures $\mu_{x,\tau,t}$, but provided the conditions \( p_x(x)=0 \) and \( T_xp_x = \mathrm{Id}_{T_xX} \) are satisfied, it will not affect our results on their asymptotic behavior.
\end{remark}

For a continuous function \(f\) with compact support on \(T_xX\), the integral against the zooming measure is given by
\[
	\int f \,d\mu_{x,\tau,t} = \sum_{\substack{v \in X(\mathbb{Q})\cap U_x\\ \mathrm{H}(v) \leq e^{t}}} f(e^{\tau t}\cdot p_x(v)).
\]
Since \(f\) has compact support, the only rational points that appear in the sum on the right-hand side are those satisfying \(\mathrm{H}(v)\leq e^t\) and \(\operatorname{d}(v,x) \lesssim e^{-\tau t}\): the zooming measure describes the distribution of rational points of height at most \(e^t\) at distance at most \(e^{-\tau t}\) from the point \(x\).
Recall that the \emph{Diophantine exponent} of a point $x\in X(\mathbb{R})$ is defined as
\[
\beta(x)=\sup\left\{\beta\geqslant 0:\begin{aligned}
		&\exists \text{ a sequence } (v_i)\subset X(\mathbb{Q}) \text{ such that }\\ 
		&\lim_{i\to+\infty} \operatorname{d}(v_i, x)=0 \text{ and } \forall i,\ \operatorname{d}(v_i,x)\leq H(v_i)^{-\beta}
	\end{aligned}\right\}.
\]
Observe that if $\tau > \beta(x)$, then for all \( t>0 \) large enough, the ball $\mathbf{B}_X(x,e^{-\tau t})$ contains no rational point $v$ of height at most \( e^t \), except possibly the point $x$ itself.
The study of the zooming measures $\mu_{x,\tau,t}$ is therefore trivial in that range.

\subsection*{Main results}

At the origin of the present paper was the desire to understand zooming measures when \( \tau < \beta(x) \) on a flag variety $X$, given as a quotient $X=G/P$ of a semisimple algebraic $\mathbb{Q}$-group $G$ by a parabolic $\mathbb{Q}$-subgroup $P$.
A study of Diophantine approximation on such varieties was started in \cite{saxce_hdr}, where it was shown in particular that there exists a constant \( \beta_X \) such that \( \beta(x) = \beta_X \) for Lebesgue almost every \( x \) in \( X(\mathbb{R}) \).
In general, the Diophantine exponent of a point \( x \) is closely related to its position with respect to the rational Schubert subvarieties in \( X \).
Recall from \cite[\S8.5]{Springer} that \( X \) admits a decomposition into Schubert cells
\[
X=\bigsqcup_{w\in W/W\cap P} BwP,
\]
where $W$ is the Weyl group associated to \( G \) and a maximal \( \mathbb{Q} \)-split torus contained in \( P \), and $B$ is a minimal parabolic group contained in $P$.
A \emph{Schubert variety} is a subset of the form \(g\overline{BwP}\), with \(w\in W/W\cap P\) and \(g\in G\); it is said to be \emph{rational} if the element \( g \) can be taken in \( G(\mathbb{Q}) \).

Our main result describes the asymptotic behavior of the zooming measures when the point $x$ is generic in the sense of the Lebesgue measure, or algebraic and outside of any rational Schubert variety. 
In the particular case where the parabolic subgroup \(P\) defining the flag variety \(X= G/P\) 
has abelian unipotent radical, our results can be formulated in terms of the elementary definition of the zooming measures given above.
Below and in the rest of the paper, by abuse of notation, we write \( \overline{\mathbb{Q}} \) to denote the subfield of \( \mathbb{R} \) consisting of real algebraic numbers over \( \mathbb{Q} \).

\begin{theorem}[Generic local distribution]
\label{th:maini}
Let $G$ be a semisimple $\mathbb{Q}$-group, $P$ a $\mathbb{Q}$-parabolic subgroup with abelian unipotent radical and write $X=G/P$ for the associated flag variety.
There exist positive constants \(c_X\) and \(d_X\) such that the following holds.\\
For all $\tau\in(0,\beta_X)$, for all $x$ in $X(\overline{\mathbb{Q}})$ not contained in any proper rational Schubert variety, for all $f\in \mathcal{C}_c(T_xX)$,
\[
	\mu_{x,\tau,t}(f) \sim_{t\to+\infty} c_X \cdot e^{t d_X (\beta_X -\tau) } \cdot \left(\int_{T_xX} f\,d\mathrm{m}\right),
\] where $\mathrm{m}$ is the Lebesgue measure on $T_xX$. 
The same estimate holds for Lebesgue almost every $x$ in $X(\mathbb{R})$.
\end{theorem}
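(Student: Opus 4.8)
The plan is to realize the zooming measure as a sum over an orbit and apply equidistribution in the homogeneous space $G(\mathbb{R})/G(\mathbb{Z})$ (or an adelic variant). First I would fix a point $x \in X(\mathbb{R})$ and lift it to $G(\mathbb{R})$: choosing $g_x \in G(\mathbb{R})$ with $g_x P(\mathbb{R}) = x$, a rational point $v = \gamma P$ of $X(\mathbb{Q})$ lying near $x$ corresponds to $\gamma \in G(\mathbb{Q})$ with $g_x^{-1}\gamma$ close to $P(\mathbb{R})$. Using the Bruhat decomposition relative to $P$ and the fact that $x$ lies in no proper rational Schubert variety, the relevant $\gamma$ all lie in the big cell, so we can write $g_x^{-1}\gamma = u(v)\, p$ with $u(v)$ in the unipotent radical $U^-$ opposite to $P$ (which is abelian by hypothesis), and $p \in P(\mathbb{R})$; the chart $p_x$ can be taken to be $v \mapsto \log u(v) \in \mathfrak{u}^- \cong T_xX$. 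The height $\mathrm{H}(v)$ is then comparable to a norm of a vector in a representation of $G$, i.e. to $\|a_t g_x^{-1}\gamma e\|^{-1}$-type quantities, where $e$ is a highest weight vector and $a_t$ is a suitable one-parameter diagonal flow; thus the condition $\mathrm{H}(v) \le e^t$ together with $e^{\tau t} p_x(v)$ bounded translates, after conjugating, into a condition that $\gamma$ lies in a translate $g_x a_s \Omega$ of a fixed bounded set $\Omega \subset G(\mathbb{R})$, with $s = s(t,\tau)$ growing linearly in $t$. Counting such $\gamma$ is counting lattice points $G(\mathbb{Q}) \cap g_x a_s \Omega$ — equivalently $G(\mathbb{Z})$-points in a translated region — which is governed by equidistribution of the expanding translates $a_s \cdot (\text{a piece of } G(\mathbb{R})/G(\mathbb{Z}))$.

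Next I would invoke the equidistribution input. The translated sets $g_x a_s \Omega$ are contracted in the $P$-directions and expanded in the $U^-$-directions; the mixing property of the $a_s$-flow on $G(\mathbb{R})/G(\mathbb{Z})$ (or the relevant $S$-arithmetic quotient, to handle all places of the height) yields that $\#(G(\mathbb{Q}) \cap g_x a_s \Omega)$ is asymptotic to $\mathrm{vol}(\Omega) \cdot e^{\rho s}$ for an explicit exponent $\rho$ coming from the modulus of the conjugation action of $a_s$ on $U^-$. The constant involves the Haar measure (Tamagawa measure) and a local density at $x$; tracking how $\mathrm{vol}$ of the relevant region depends on the test function $f$ produces the factor $\int f \, d\mathrm{m}$, and the scaling $e^{t d_X(\beta_X - \tau)}$ emerges from computing $s$ in terms of $t$ and $\tau$ and matching exponents — here $\beta_X$ is itself expressible through the same representation-theoretic data (ratio of highest weight to the relevant expansion rate), which is exactly why $\beta_X$ appears. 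The hypothesis that $x$ avoids every proper rational Schubert variety is what guarantees no rational point escapes to the boundary strata, so that the relevant translates stay in a fixed compact part of the quotient and the equidistribution is uniform; for Lebesgue-a.e. $x$ this holds automatically since the union of rational Schubert varieties is a countable union of proper subvarieties, hence Lebesgue-null.

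The main obstacle will be the uniformity and the boundary/thickening estimates: one must show that the expanding translates do not have mass escaping to the cusp of $G(\mathbb{R})/G(\mathbb{Z})$ (or that such escape is negligible), and that the indicator of the region $\Omega$ — which has a boundary coming from the height inequality and from the support of $f$ — can be smoothed without losing the main term, so that the genuinely non-compactly-supported counting problem reduces to integration of a Lipschitz function against an equidistributing measure. This requires effective (polynomial-rate) equidistribution or at least a non-escape-of-mass argument together with a careful decomposition of $\Omega$ into pieces on which the height is essentially constant. A secondary technical point is passing between the single archimedean height used in the elementary definition of $\mu_{x,\tau,t}$ and the adelic height natural for the orbit-counting; this is handled by the product formula and by absorbing the non-archimedean contributions into the volume constant $c_X$. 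Once these are in place, the asymptotic for algebraic $x$ outside rational Schubert varieties and for Lebesgue-a.e.\ $x$ follow from the same computation, completing the proof.
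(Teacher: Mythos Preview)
Your overall architecture matches the paper's: lift to $G$, use the big cell and an exponential chart into $\mathfrak{u}^-$, conjugate by a one-parameter diagonal subgroup $a_t$ so that the zooming becomes a fixed-shape lattice point count in $G/\Gamma$, and then feed in equidistribution. The paper in fact uses equidistribution of translated periodic orbits of the subgroup $L=\bigcap_\alpha \mathrm{Stab}_G e_{\varpi_\alpha}$ (with an effective rate), rather than mixing of the $a_s$-flow per se, but this is a technical refinement rather than a conceptual divergence.

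The genuine gap is in how you use the hypotheses on $x$. You write that the Schubert condition ``guarantees no rational point escapes to the boundary strata, so that the relevant translates stay in a fixed compact part of the quotient,'' and that the Lebesgue case ``holds automatically since the union of rational Schubert varieties is \dots\ Lebesgue-null.'' Neither of these is the correct mechanism. What is actually needed is that the diagonal orbit $(a_t s_x \Gamma)_{t>0}$ has \emph{zero rate of escape} in $G/\Gamma$, i.e.\ $\frac{1}{t} s(a_t s_x) \to 0$ in the Siegel decomposition; this is what allows you, for each large $t$, to replace $g_t = a_{\tau t} s_x^{-1} c$ by some $g_2$ with $\lVert g_2\rVert \leq e^{\eta t}$ for arbitrarily small $\eta$, so that the effective equidistribution error term beats the sublinear drift into the cusp. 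The orbit does \emph{not} stay in a fixed compact set in general.

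Establishing zero rate of escape is where the two hypotheses enter, and they require completely different arguments. For $x \in X(\overline{\mathbb{Q}})$ outside every proper rational Schubert variety, this is a consequence of a parametric version of Schmidt's subspace theorem; the Schubert condition is precisely what rules out the exceptional subspaces. For Lebesgue-a.e.\ $x$, the argument is a Borel--Cantelli computation using exponential decay of the Haar measure of high-cusp sets. Your claim that the a.e.\ case follows because a.e.\ $x$ avoids rational Schubert varieties is false as stated: a transcendental $x$ lying outside every rational Schubert variety can still have positive rate of escape, so avoiding those varieties is not by itself sufficient, and the subspace-theorem route is unavailable for transcendental $x$. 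You need to supply these two inputs (or cite them) to close the argument.
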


Recall that the $\mathbb{Q}$-rank of the flag variety $X$ is equal to $\mathrm{rk}_{\mathbb{Q}} X = \mathrm{rk}_{\mathbb{Q}} G-\mathrm{rk}_{\mathbb{Q}} P$.
The assumption that the unipotent radical of \( P \) is abelian implies in particular that \( P \) is maximal and \( X \) is a flag variety of \( \mathbb{Q} \)-rank one.
We also study local distribution of rational points on general flag varieties, without any restriction on the \( \mathbb{Q} \)-rank.
In that setting, one needs to adjust the definition of the zooming measures in two ways: first the distance used on $X$ should be the Carnot-Carathéodory metric (and so the rescaling on $T_xX$ is not by homotheties), and second, when the variety is not of rank one, one has to control simultaneously all heights in a generating family of heights.
This idea that controlling all the heights can help avoid degenerate behaviors originates in Peyre~\cite[\S4]{Peyre2}.
In fact, as a by-product of our approach, we will also be able to answer affirmatively \cite[Question~4.8]{Peyre2} in the particular setting of flag varieties.
Our main result on local distribution on general flag varieties, Theorem~\ref{th:gldg} in the main body of the text, gives an asymptotic formula for zooming measures similar to the one in Theorem~\ref{th:maini}, but only when the zoom factor \( \tau \) is small enough.
It would be interesting to determine in that setting what the optimal range is for the zoom factor for such an asymptotic equivalent to hold.
The reader is referred to the Section \ref{se:openprob} for a more thorough discussion of this problem and other related questions.

\bigskip

Before we briefly discuss the proofs of our results, let us illustrate Theorem~\ref{th:maini} with the most elementary examples.

\subsection*{Examples}
\subsubsection*{Projective spaces}
Let $X=\mathbb{P}^n$ be the projective $n$-space.
Then $X\simeq \operatorname{SL}_{n+1}/P$ where $P$ is the stabilizer of a rational line in the standard representation.
It is well known from metric Diophantine approximation that the generic Diophantine exponent in that is equal to \( \beta_{\mathbb{P}^n} = \tfrac{n}{n-1} \).
For the projective space, Theorem \ref{th:maini} can also be proved with a more direct approach; this is done for instance in \cite{Huangthesis} when $n=1$, using an argument combining Roth's theorem for real algebraic numbers and the theory of uniform distribution modulo one.

\subsubsection*{Projective quadrics}
Let $q$ be a non-degenerate indefinite rational quadratic form in $n\geqslant 4$ variables and let $G=\operatorname{SO}_q\subset\operatorname{GL}_n$.
Let $e_0\in \mathbb{Q}^n$ be such that $q(e_0)=0$ and $P=\mathrm{Stab}_G(\mathbb{Q}e_0)$.
Consider the projective quadric hypersurface $X=(q=0)\simeq G/P$.
For those varieties Fishman, Kleinbock, Merrill and Simmons \cite{FKMS} proved that $\beta_X=1$. 

In \cite{Kelmer-Yu,HSS1,HSS2}, it was shown that the zooming measures at \emph{any} point \( x \) equidistribute, provided that the zoom factor $\tau\in(0,\tfrac{1}{2})$.
This zoom factor is optimal when $x$ is a $\mathbb{Q}$-point.
Our approach works with an arbitrary zoom factor \( \tau \in (0,1) \), provided that the point \( x \) is generic:  \( x \) is either a random point for the Lebesgue measure, or an element of \( X(\overline{\mathbb{Q}}) \) not contained in any totally isotropic rational subspace for the quadratic form \( q \).

\subsubsection*{Grassmannians}
The Grassmann variety $\mathrm{Gr}_{l,d}$ parametrizing $l$-dimensional subspaces within a given $d$-dimensional space is a flag variety under $G=\operatorname{SL}_d$ with the parabolic subgroup \( P \) formed by the matrices whose lower-left $(l-d)\times l$ entries are all zero.
In that setting, it was shown by the second author \cite{deSaxcegrass} that $\beta_X = \tfrac{d}{l(d-l)}$.
The condition that a point \( x \) is not contained in any proper rational Schubert variety is equivalent to saying that every rational subspace \( W \) in \( \mathbb{R}^d \) intersects \( x \) minimally : \( \dim(x\cap W) = \max(0, \dim x + \dim W -d) \).

\bigskip
It is not difficult to reformulate Theorem~\ref{th:maini} as a counting statement for integer points in the cone \( \tilde{X} = G/L \) over the variety \( X \), where $L$ is the subgroup of $P$ consisting of elements lying in the kernel of every $\mathbb{Q}$-character of $P$.
However, the region arising from applying the zoom measures to a characteristic function of a ball in the tangent plane is not \enquote{well-rounded}: its boundary can be large compared to its interior, especially if the zoom factor is large.
Therefore, one cannot directly argue that the number of integer points inside it is comparable with its volume.
To resolve this issue, we make use of the action of a well-chosen diagonalisable flow $(g_t^x)_{t \in \mathbb{R}}$, which turns the lopsided region into a well-rounded one \( \mathcal{R}_t \).
One then has to count points in \( \mathcal{R}_t \) that belong to a new lattice \( \Delta_t^x \), image of the integer lattice under the flow \( g_t^x \).
Fortunately, when \( x \) is chosen randomly according to the Lebesgue measure on \( X(\mathbb{R}) \), or is algebraic and not contained in any rational Schubert variety, the effect of \( g_t^x \) on the successive minima of the lattice \( \Delta_t^x \) is negligible.
This can be used to show that the lattice point counting is indeed comparable with the volume.
For that, we combine the strategy of Mohammadi and Salehi Golsefidy \cite{msg}, which goes back to \cite{DRS,EM}, and more recent results on effective equidistribution of periodic orbits in finite-volume homogeneous spaces~\cite{Dabbs-Kelly-Li, shi}.
We note that, inspired by our approach, Pfitscher \cite[Theorem 1.6]{Pfitscher} has recently obtained Schmidt-type counting results for rank-one flag varieties.

\section{Heights and counting on a flag variety}
\label{sec:hc}

Let $X=G/P$ be a rational flag variety, where $G$ is a connected semisimple algebraic group over $\mathbb{Q}$ and $P$ is a parabolic subgroup over $\mathbb{Q}$. 
If
\(
\iota\colon\widetilde{G}\to G
\)
is an isogeny, then the inverse image $\iota(P)$ is equal to the parabolic subgroup $\tilde{P}$ with Lie algebra $\mathfrak{p}$,
so
\(
X = \tilde{G}/\tilde{P}.
\)
This will allow us to reduce to the case where \( G \) is simply connected.
Moreover, by the classification of \( \mathbb{Q} \)-parabolic subgroups given in~\cite[Théorème~11.8]{borel_iga}, any \( \mathbb{Q} \)-anisotropic simple \( \mathbb{Q} \)-factor of \( G \) must be included in \( P \).
Quotienting by the sum of all \( \mathbb{Q} \)-anisotropic factors, we may always replace \( G \) by a subgroup \( G_1 \) containing \( P_1 = P \cap G_1 \) as a parabolic subgroup, so that \( X = G/P = G_1/P_1 \).
Note that since any (real) compact factor is included in a \( \mathbb{Q} \)-anisotropic factor, the group \( G_1 \) has no compact factor.
Thus, we shall from now on assume without loss of generality that \( G \) is simply connected and without compact factors.

We let $T\subset P$ be a maximal split  $\mathbb{Q}$-torus.
Write $A=T(\mathbb{R})^\circ$ for the connected component of $T(\mathbb{R})$, $\mathfrak{a}$ for the Lie algebra of $A$, and $\Pi\subset\mathfrak{a}^*$ for a basis of the root system associated to $G$ and $T$.
Again by the classification of parabolic subgroups over $\mathbb{Q}$ \cite[Théorème~11.8]{borel_iga}, there exists a subset $\theta\subset\Pi$ such that $\mathfrak{p}$, the Lie algebra of $P$, is equal to the direct sum of $\mathfrak{a}$ with all spaces $\mathfrak{g}_\beta$ associated to the roots $\beta$ in which no element of $-\theta$ appears:
\[
\mathfrak{p} = \mathfrak{a}\oplus \bigoplus_{\substack{\beta:\forall\alpha\in\theta,\\ \beta\not\succ -\alpha}}\mathfrak{g}_\beta.
\]
The unipotent radical of \( P \) and its opposite will be denoted $U$ and $U^-$, with Lie algebras \( \mathfrak{u} \) and \( \mathfrak{u}^- \) respectively:
\[
\mathfrak{u} = \bigoplus_{\substack{\beta:\exists\alpha\in\theta\\ \beta\succ\alpha}}\mathfrak{g}_\beta
\quad\mbox{and}\quad
\mathfrak{u}^- = \bigoplus_{\substack{\beta:\exists\alpha\in\theta\\ \beta\succ\alpha}}\mathfrak{g}_{-\beta}.
\]
(We recall that the notation $ \beta\succ\alpha$ means that $\beta-\alpha$ is a sum of simple positive roots with positive coefficients.)

\subsection{Representations and heights on $X$}
	
We shall denote by \((\varpi_{\alpha})_{\alpha\in \Pi}\) the family of fundamental \(\mathbb{Q}\)-weights of \(G\).
Those are defined by
\[
	\forall \alpha , \beta \in \Pi,\quad
	\frac{2\langle\varpi_{\alpha},\beta\rangle }{\langle\beta,\beta\rangle}
	=\delta_{ \alpha , \beta }.
\]
Since $G$ is simply connected, it follows from~\cite[\S12.13]{boreltits} that for each fundamental weight \( \varpi_\alpha \), there exists a strongly irreducible $\mathbb{Q}$-representation $G\to\mathrm{GL}_{V_{ \varpi_\alpha}}$ with highest weight $ \varpi_\alpha$.
The weight space in $V_{ \varpi_\alpha}$ associated to $ \varpi_\alpha$ is a line generated by a rational vector $e_{ \varpi_\alpha}$.
For the study of the flag variety \( X = G/P \), we shall only be interested in the fundamental weights $\varpi_\alpha$ that are trivial on $P$, i.e., $\alpha\notin \theta$.
For \(\alpha \not\in \theta\), the line generated by $e_{ \varpi_\alpha}$ is stable under $P$ and we therefore obtain an embedding
\[
	\begin{array}{llll}
		\iota_\alpha \colon & G/P & \hookrightarrow & \mathbb{P}(V_{ \varpi_\alpha}) \\
			    & gP  & \mapsto & [g e_{ \varpi_\alpha}]
	\end{array}.
\] 
Let $\mathcal{P}$ denote the set of primes numbers. Any choice of an adelic norm $(\lVert \cdot \rVert_p)_{p\in \mathcal{P}\cup \infty}$ on \(V_{\varpi_\alpha}\) yields a height \( \mathrm{H}_\alpha \) on \(X(\mathbb{Q})\) given by
\[
	\mathrm{H}_\alpha(gP) = \prod_{p \in \mathcal{P}\cup \infty} \lVert g e_{\varpi_\alpha}\rVert_p.
\]
Equivalently, one may consider the lattice associated to the adelic norm
\[
	\Lambda_\alpha = \{ \mathbf{v} \in V_{\varpi_\alpha }(\mathbb{Q})\ |\ \forall p\in \mathcal{P},\ \lVert \mathbf{v}\rVert_p \leq 1\}	
\]
and then \(\mathrm{H}_{\alpha }(v)\) is the Euclidean norm of a primitive element \(\mathbf{v} \in \Lambda_\alpha\) proportional to \(\iota_\alpha(v)\).
In the sequel, we fix a maximal compact subgroup \( K \) in \( G(\mathbb{R}) \) and always assume that the Euclidean norm on \( V_{\varpi_\alpha } \) is invariant under \( K \).
It will also be convenient to use the logarithmic height
\[
	\mathrm{h}_\alpha (v) = \log \mathrm{H}_\alpha(v) 
		= \sum_{p\in \mathcal{P}\cup\infty} \log\ \lVert g e_{\varpi_\alpha} \rVert_p.
\]
In~\cite[\S4]{Peyre2}, Peyre suggested to study simultaneously all possible heights on a nice Fano-type variety by defining the multiheight of a rational point on \( X \).
In our setting of flag varieties, this boils down to the following definition.
\begin{definition}[Multiheight]
	The \emph{multiheight} of a point \( v \) in \( X( \mathbb{Q}) \) is the unique element \( \mathbf{h}(v) \) in \(\mathfrak{a}_\theta = \mathfrak{a} \cap \theta^\perp\) such that for each \( \alpha \in \Pi\setminus\theta \),
	\[
		\langle \varpi_\alpha , \mathbf{h}(v) \rangle = \mathrm{h}_\alpha(v).	
	\]
\end{definition}

Since the lattice \( \Lambda_\alpha \) is discrete in \(  V_{\varpi_\alpha } (\mathbb{R}) \), each height \( \mathrm{h}_\alpha(v) \) is uniformly bounded below when \( v \) varies among all rational points in \( X \), and therefore, the multiheight \( \mathbf{h}(v) \) remains within bounded distance of the \emph{dual effective cone} \( C_{\mathrm{eff}}^\vee \), defined by
\[
	C_{\mathrm{eff}}^\vee = \{ Y \in \mathfrak{a}_{ \theta} \ |\ \forall \alpha \in \Pi\setminus \theta,\ \langle \varpi_\alpha , Y \rangle \geq 0 \}.
\]
We consider the measure \(\nu\) on \( \mathfrak{a}_{ \theta}\) defined by
	\[
	\nu( \mathcal{D}) = \int_{ \mathcal{D}} e^{ \langle \varrho_X, y\rangle} \,dy,
	\]
where \(\varrho_X\) denotes the sum of all roots in the unipotent radical $U$ of \(P\), counted with multiplicities, \(dy\) is some choice of Lebesgue measure on \(\mathfrak{a}_\theta \), and \( \mathcal{D} \) is any measurable subset of \( \mathfrak{a}_\theta \).
Following Peyre's suggestions \cite[\S4]{Peyre}, we want to study rational points in $X$ whose multiheight belongs to a compact subset of large measure in \( C_{\mathrm{eff}}^\vee\). 
For that, for any measurable subset $\mathcal{D}\subset \mathfrak{a}_\theta$, we define \begin{equation}\label{eq:hD}
	X(\mathbb{Q})_{\mathbf{h} \in \mathcal{D}}= \{v\in X(\mathbb{Q}): \mathbf{h}(v)\in \mathcal{D}\}.
\end{equation} 	
As suggested in \cite[Remark 4.6]{Peyre2}, the following growing family $(\mathcal{D}_t)_{t\to\infty}$ is of particular interest. We fix a compact domain \( \mathcal{D}_0 \subset \mathfrak{a}_{ \theta}\) with smooth boundary, an element \(u\) in the interior \(\mathring{C}_{\mathrm{eff}}^\vee\) of the dual effective cone, and let, for all \(t>0\),
\[
	\mathcal{D}_t = \mathcal{D}_0 + t\cdot u.
\]
Our aim is to prove an asymptotic formula for the cardinality of $X(\mathbb{Q})_{\mathbf{h} \in \mathcal{D}_t}$, as \(t\) tends to \(+\infty\).


\begin{example}[Polyhedrons and bounds on heights]
	For each \(\alpha \in \Pi\setminus \theta \), fix two positive parameters \(a_\alpha < b_\alpha \).
	Let \(u\) be the unique element in \(\mathfrak{a}_\theta \) such that \( \langle \varpi_\alpha, u \rangle = 1\) for all \(\alpha \) in \(\Pi\setminus \theta \), and consider the polyhedron
	\[
		\mathcal{D}_0 = \{ z \in \mathfrak{a}_{\theta}\ |\ \forall \alpha \in \Pi\setminus \theta,\ \log a_\alpha \leq \langle \varpi_\alpha, z \rangle \leq \log b_\alpha\}.
	\]
	Letting \(H=e^t\), the condition \(\mathbf{h}(v) \in \mathcal{D}_t\) is equivalent to the system of inequalities on the heights:
	\[
		\forall \alpha \in \Pi\setminus \theta,\quad
		a_\alpha H \leq \mathrm{H}_\alpha(v) \leq b_\alpha H.
	\]
\end{example}


\begin{remark}
	For a general Fano-type variety \( X \), one first defines the \emph{effective cone} \( C_{\mathrm{eff}} \) in \( \mathrm{Pic}(X)_{\mathbb{R}} \) as the cone generated by effective divisors.
	The multiheight \( \mathbf{h}(v) \) of a rational point \( v \) in \( X(\mathbb{Q}) \) then takes values in its dual \( \mathrm{Pic}(X)_{\mathbb{R}}^\vee \), at bounded distance from the dual effective cone
	\[
	C_{\mathrm{eff}}^\vee = \{ y \in \mathrm{Pic}(X)_{\mathbb{R}}^\vee \ |\ \forall x \in C_{\mathrm{eff}},\ \langle x,y \rangle \geq 0 \}.
	\]
	The dual effective cone $C_{\mathrm{eff}}^\vee$ is endowed with the measure \( \nu \) given by \( \nu(\mathcal{D}) = \int_{\mathcal{D}} e^{ \langle \omega_X^{-1},y \rangle }\,dy \), where \( \omega_X^{-1} \in \mathrm{Pic}(X)\) denotes the class of the anticanonical line bundle.
	Peyre~\cite[Question~4.8]{Peyre2} gave a prediction for a possible asymptotic formula for the cardinality of $X(\mathbb{Q})_{\mathbf{h} \in \mathcal{D}_t}$ \eqref{eq:hD}, as \(t\) tends to \(+\infty\).
	
	When \( X = G/P \) is a flag variety, we have $\operatorname{rk}(\operatorname{Pic}(X))=\operatorname{rk}(\mathbf{X}^*(P))$, the latter being the group of $\mathbb{Q}$-characters of $P$ (cf. \cite[\S2]{fmt}). With the above notation, the cone of effective divisors can be identified with 
	\[
	C_{\mathrm{eff}}=\sum_{\alpha \in \Pi \setminus \theta} \mathbb{R}^+\varpi_\alpha\subseteq \mathrm{Pic}(X)_{\mathbb{R}}, 
	\]
	and the dual effective cone becomes
	\[
	C_{\mathrm{eff}}^\vee = \{ Y \in \mathfrak{a}_{ \theta} \ |\ \forall \alpha \in \Pi\setminus \theta,\ \langle \varpi_\alpha , Y \rangle \geq 0 \}.
	\]
	With these identifications, the anticanonical line bundle $\omega_X^{-1}$ can be identified with $\varrho_X$, i.e. the sum of all roots occurring in the unipotent radical $U$ of \(P\), with multiplicities equal to the dimension of the corresponding eigenspace \cite[(6.2.1)]{Peyre}.
	We refer the interested reader 
	to \cite[\S6]{Peyre} for the passage from that general setting to that of flag varieties.
	For our arguments, one may take as a definition the explicit description of the dual effective cone as a subset of \( \mathfrak{a}_\theta \).
\end{remark}

Before we turn to our study of zooming measures, we explain how the techniques of homogeneous dynamics, as developed in particular by Mohammadi and Salehi Golsefidy~\cite{msg}, can be used to check the validity of Peyre's formula in the case where \( X \) is a flag variety.
This is the content of Theorem~\ref{th:count} below, whose proof will be given in the next two paragraphs.

\begin{theorem}[Counting points with all heights controlled]
	\label{th:count}
	Let \(X=G/P\) be a flag variety, given as the quotient of a semisimple \(\mathbb{Q}\)-group by a parabolic \(\mathbb{Q}\)-subgroup.
	There exists a constant \(\kappa_X\) depending only on \(X\) such that for any choice of compact domain \(\mathcal{D}_0\) and \(u\) in \( \mathring{C}_{\mathrm{eff}}^\vee\)
	\[
		\# X(\mathbb{Q})_{\mathbf{h} \in \mathcal{D}_t}
		\sim_{t\to+\infty}
		\kappa_X \cdot \nu(\mathcal{D}_t).
	\]
\end{theorem}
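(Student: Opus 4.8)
The plan is to translate the counting problem into a lattice point count in a homogeneous space and then apply equidistribution of expanding horospherical orbits. First I would pass to the affine cone: the $\mathbb{Q}$-characters of $P$ span a lattice $\Xi$ in $\mathfrak{a}_\theta^*$, and intersecting their kernels defines the subgroup $L\subset P$, so that $\widetilde X = G/L$ is a torus bundle over $X$ on which the multiheight lifts to an honest function. A point $v\in X(\mathbb{Q})$ together with an integral choice of representative in each $\Lambda_\alpha$ corresponds to a point in $\widetilde X(\mathbb{Q})$, hence — after fixing an integral structure — to a point in a single orbit $\Gamma\backslash\Gamma H$ or rather in the homogeneous space $G(\mathbb{Z})\backslash G(\mathbb{R})$ via the orbit map $gL\mapsto \Gamma g$. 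Concretely, embedding $G/L$ diagonally through the $\iota_\alpha$'s realizes $X(\mathbb{Q})$ as a union of $G(\mathbb{Z})$-orbits of primitive vectors, and the constraint $\mathbf{h}(v)\in\mathcal{D}_t$ becomes the constraint that $g\,e_{\varpi_\alpha}$ has Archimedean log-norm in a prescribed box of the form $\mathcal{D}_0+t\cdot u$ for every $\alpha\notin\theta$. Using the Iwasawa/Bruhat decomposition relative to $P$, one writes $g=u^- a k$ with $u^-\in U^-(\mathbb{R})$, $a\in A$, $k\in K$; since the norm is $K$-invariant and $U^-$ fixes $e_{\varpi_\alpha}$ up to lower-weight terms, the height $\mathrm{h}_\alpha$ is governed by the $A$-component $a$, while the "shape" of the horospherical coordinate $u^-$ is free. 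Thus $X(\mathbb{Q})_{\mathbf{h}\in\mathcal{D}_t}$ is counted by the number of lattice points of $G(\mathbb{Z})$ in a region $\mathcal{R}_t = U^-(\mathbb{R})\,\exp(\mathcal{D}_t^{\mathfrak a})\,K / (\text{stabilizer})$, which is a translate $a_t\,\mathcal{R}_0$ of a bounded region by the expanding element $a_t=\exp(t\,u)$ normalizing $U^-$.

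Next I would invoke the by-now standard mechanism going back to Duke--Rudnick--Sarnak and Eskin--McMullen, in the quantitative form used by Mohammadi--Salehi Golsefidy \cite{msg}: the count $\#(G(\mathbb{Z})\cap \mathcal{R}_t)$ is asymptotic to $\mathrm{vol}(\mathcal{R}_t)$ (for the suitably normalized Haar measure) provided the family $(\mathcal{R}_t)$ is \emph{well-rounded} and the translates $a_t\,\mathcal{R}_0\,G(\mathbb{Z})$ equidistribute in $G(\mathbb{Z})\backslash G(\mathbb{R})$. Well-roundedness here is automatic because $\mathcal{R}_t$ is a product of a fixed compact piece in $K$, a box of bounded shape times $t$ in the abelian $\mathfrak a_\theta$-direction, and the expanding unipotent direction — the boundary-to-bulk ratio of such sets is $O(1/t)\to 0$; this is exactly the feature that is \emph{not} available for a single height, which is why controlling all heights simultaneously, following Peyre \cite[\S4]{Peyre2}, is essential. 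Equidistribution of $a_t U^-(\mathbb{R})$-orbits (expanding horospheres, or more precisely expanding translates of the $U^-$-orbit through the identity coset) in the finite-volume space $G(\mathbb{Z})\backslash G(\mathbb{R})$ follows from mixing of the $A$-action — this is classical (Eskin--McMullen), and effective versions are available from \cite{Dabbs-Kelly-Li, shi} though for the qualitative asymptotic here mixing suffices. Integrating the equidistribution statement against the indicator of $\mathcal{R}_0$ (after a smoothing/sandwiching argument to handle the non-smooth indicator, using the smoothness of $\partial\mathcal{D}_0$ and $\partial\mathbf{B}_K$) yields
\[
\# X(\mathbb{Q})_{\mathbf{h}\in\mathcal{D}_t} = \kappa_X\cdot \mathrm{vol}_{\nu\otimes\text{Haar}}(\mathcal{R}_t)\cdot(1+o(1)),
\]
and a change of variables $y\mapsto y+t\,u$ on the $\mathfrak a_\theta$-factor identifies the leading volume with $\nu(\mathcal{D}_t)=\int_{\mathcal{D}_0+t u}e^{\langle\varrho_X,y\rangle}\,dy$: the exponential weight $e^{\langle\varrho_X,y\rangle}$ is precisely the Jacobian of the conjugation action of $\exp(y)$ on $U^-(\mathbb{R})$ (the modular factor $\varrho_X$ being the sum of roots in $\mathfrak u$ with multiplicity), so the Haar measure of the expanded unipotent slab produces exactly that density. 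The constant $\kappa_X$ is the ratio of the $G(\mathbb{R})$-invariant measure normalizations — a product of the covolume of $G(\mathbb{Z})$, a local density at each finite place coming from the choice of $\Lambda_\alpha$, and a real density — and it depends only on $X$ and the fixed adelic norms, as claimed.

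The main obstacle I anticipate is the passage from rational points on $X$ to lattice points in $G(\mathbb{Z})\backslash G(\mathbb{R})$ in a way that correctly accounts for the integral structure at \emph{all} finite places simultaneously and for the fact that $X(\mathbb{Q})$ may break into several $G(\mathbb{Z})$-orbits (the failure of strong approximation being measured by a finite adelic quotient). One must show that each orbit contributes the same volume asymptotics and that the finite sum of local densities assembles into the single constant $\kappa_X$; here the arithmetic of the representations $V_{\varpi_\alpha}$, the fact that $G$ is simply connected (so the $V_{\varpi_\alpha}$ exist over $\mathbb{Q}$ and the relevant orbit maps are well-behaved), and a Siegel--Weil-type bookkeeping are what make it work. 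A secondary technical point is that the region $\mathcal{R}_0$ in the $U^-$-direction is genuinely unbounded a priori (the horospherical coordinate is only constrained by the requirement that the resulting vector be \emph{primitive}, not by the box on heights); one handles this by a truncation argument, showing that the contribution of $u^-$ of large norm is negligible because such points have a nonzero lower-weight component forcing a larger height, so effectively only a compact part of $U^-(\mathbb{R})$ is relevant at each scale — this is the step where $P$ having the \emph{exact} parabolic type matters and where one uses that the $\iota_\alpha$ jointly embed $X$.
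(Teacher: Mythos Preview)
Your overall strategy --- reduce to a lattice point count on the cone $G/L$, unfold to an integral of translated periodic orbits, apply equidistribution, and read off the volume as $\nu(\mathcal{D}_t)$ via the modular factor $e^{\langle\varrho_X,y\rangle}$ --- is exactly the mechanism the paper uses. The multi-orbit bookkeeping you anticipate is also what happens: one writes $X(\mathbb{Q})=\bigsqcup_{c\in C}\Gamma cP$, treats each $c$ separately via $\Gamma^c/(\Gamma^c\cap L)$, and sums the resulting constants.

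Where your write-up diverges from the paper is in the coordinate system, and this is not innocuous. You decompose $g=u^-ak$ and assert that the height is ``governed by the $A$-component'' because $U^-$ moves $e_{\varpi_\alpha}$ only by lower-weight terms. But with that order of factors, $k e_{\varpi_\alpha}$ is no longer a weight vector, so $\lVert u^-ak\,e_{\varpi_\alpha}\rVert$ is \emph{not} a clean function of $a$; the height genuinely depends on $u^-$ (and on $k$), and your region $\mathcal{R}_t$ is not simply $U^-\exp(\mathcal{D}_t)K$. This is what forces you into the ``unbounded $U^-$-direction'' worry and the truncation argument at the end --- a self-inflicted difficulty.

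The paper instead works on $G/L$ with the decomposition $G=K\exp(\mathfrak{a}_\theta)L$, so that $gL=ke^yL$ and, since $L$ fixes each $e_{\varpi_\alpha}$ and the norm is $K$-invariant, one has \emph{exactly} $\log\lVert g e_{\varpi_\alpha}\rVert=\langle\varpi_\alpha,y\rangle$. The height constraint $\mathbf{h}(v)\in\mathcal{D}_t$ then becomes the box condition $y\in -q_c+\mathcal{D}_t$ with no error term, and the integration domain is $K\times(-q_c+\mathcal{D}_t)$, which is compact --- there is no $U^-$-coordinate at all. Correspondingly, the equidistribution input is not ``$a_tU^-$-horospheres'' but translated \emph{$L$-periodic orbits}: $(e^y)_*\mathrm{m}_{L/(\Gamma^c\cap L)}\to\mathrm{m}_{G/\Gamma^c}$ as $\lfloor y\rfloor\to\infty$ (the paper's Theorem~\ref{th:equi}, from \cite{msg,shi}). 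Pairing against a bump $\psi_\varepsilon$ near the identity coset and letting $\varepsilon\to 0$ finishes the count for each $c$, with no well-roundedness subtlety beyond the standard smoothing. Your approach can presumably be salvaged, but switching to the $K\exp(\mathfrak{a}_\theta)L$ coordinates eliminates both obstacles you flag.
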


\begin{remark}
	One has \( \nu(\mathcal{D}_t) = e^{ t \langle \varrho_X, u \rangle } \nu(\mathcal{D}_0) \).
\end{remark}

	In his conjectural formula, Peyre normalizes the Lebesgue measure on \( \mathfrak{a}_\theta \simeq \mathrm{Pic}_{ \mathbb{R}}(X)^\vee \) so that the dual of the Picard group has covolume one.
	This natural choice allows him to express the constant \( \kappa_X \) in terms of arithmetic and geometric data on \( X \).
	Our proof also yields a formula for \( \kappa_X \), see \eqref{eq:kappa} below, but it would require a more careful analysis to check directly that it indeed coincides with Peyre's constant.
	We briefly comment on this problem at the end of the paper.

\subsection{An arithmetic group acting on rational points}

Recall that for each \(\alpha \) in \(\Pi\setminus \theta \), the adelic norm on \(V_{\varpi_\alpha }\) defines a rational lattice \(\Lambda_\alpha \) in \(V_{\varpi_\alpha }\).
Applying~\cite[Proposition~7.12]{borel_iga} several times, we may construct an arithmetic subgroup \(\Gamma \) in \(G\) such that each \(\Lambda_\alpha \) is stable under the action of \(\Gamma \).
By \cite[Proposition~15.6]{borel_iga}, the set of rational points \(X(\mathbb{Q})\) is a finite union of \(\Gamma \)-orbits, so we may fix a finite set \(C\) in \(G(\mathbb{Q})\) such that
\begin{equation}\label{eq:union}
	X(\mathbb{Q}) = \bigsqcup_{c \in C} \Gamma c P.
\end{equation}
The following elementary lemma will allow us to understand the multiheight of a point \( v \) in \( X(\mathbb{Q}) \) in terms of the element \( \gamma \) in \( \Gamma \) such that \( v=\Gamma c P \).

\begin{lemma}
	\label{lm:halpha}
	For each \(\alpha \in \Pi\setminus \theta\) and \(c \in C\), there exists \(q_{c,\alpha} \in \mathbb{Q}\) such that the height of an element \(v\) in \(X(\mathbb{Q})\) written as \(v = \gamma c P\) for some \(\gamma\) in \(\Gamma \) and \(c \in C\) is given by
	\[
	\mathrm{h}_\alpha(v) = \log\, \lVert q_{c,\alpha}\gamma c e_{\varpi_\alpha }\rVert.
	\]
\end{lemma}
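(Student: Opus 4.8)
The plan is to unwind the definition of the height $\mathrm{h}_\alpha$ and exploit that the relevant lattice $\Lambda_\alpha$ is $\Gamma$-stable, so that the only source of $p$-adic contributions comes from the fixed finite set $C$. Recall that for $v = \gamma c P$ we have $\iota_\alpha(v) = [\gamma c\, e_{\varpi_\alpha}]$ in $\mathbb{P}(V_{\varpi_\alpha})$, and by the reformulation given just before the lemma, $\mathrm{H}_\alpha(v)$ is the Euclidean (archimedean) norm of a primitive vector of $\Lambda_\alpha$ proportional to $\gamma c\, e_{\varpi_\alpha}$. So the task is to produce, for each $c\in C$ and each $\alpha$, a single rational scalar $q_{c,\alpha}$ such that $q_{c,\alpha}\,\gamma c\, e_{\varpi_\alpha}$ is primitive in $\Lambda_\alpha$ for \emph{every} $\gamma\in\Gamma$.

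First I would fix $c\in C$ and $\alpha\in\Pi\setminus\theta$ and consider the vector $w_c = c\, e_{\varpi_\alpha} \in V_{\varpi_\alpha}(\mathbb{Q})$. Since $w_c$ is a nonzero rational vector and $\Lambda_\alpha$ is a full rational lattice, there is a unique positive rational $q_{c,\alpha}$ such that $q_{c,\alpha} w_c$ is the primitive element of $\Lambda_\alpha$ on the line $\mathbb{Q}w_c$; concretely, at each prime $p$ one has $\lVert q_{c,\alpha} w_c\rVert_p = 1$ when $\lVert\cdot\rVert_p$ is normalized so that $\Lambda_\alpha$ is the unit ball, and this pins down $q_{c,\alpha}$ up to sign (choose it positive). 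Now apply $\gamma\in\Gamma$: since $\Gamma$ stabilizes $\Lambda_\alpha$, it acts on $V_{\varpi_\alpha}(\mathbb{Q}_p)$ by an element of $\mathrm{GL}(\Lambda_\alpha\otimes\mathbb{Z}_p)$ for every $p$, hence preserves $p$-adic norms: $\lVert \gamma u\rVert_p = \lVert u\rVert_p$ for all $u$ and all finite $p$. Therefore $\lVert q_{c,\alpha}\,\gamma c\, e_{\varpi_\alpha}\rVert_p = \lVert q_{c,\alpha}\, w_c\rVert_p = 1$ at every finite $p$, which says exactly that $q_{c,\alpha}\,\gamma c\, e_{\varpi_\alpha}$ is the primitive vector of $\Lambda_\alpha$ proportional to $\iota_\alpha(v)$.

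Consequently, by the reformulation of $\mathrm{H}_\alpha$ recalled above,
\[
\mathrm{H}_\alpha(v) = \lVert q_{c,\alpha}\,\gamma c\, e_{\varpi_\alpha}\rVert,
\]
the Euclidean norm at the archimedean place, and taking logarithms gives $\mathrm{h}_\alpha(v) = \log\lVert q_{c,\alpha}\,\gamma c\, e_{\varpi_\alpha}\rVert$, as claimed. Doing this for each of the finitely many pairs $(c,\alpha)$ produces the required finite collection of rationals $q_{c,\alpha}$.

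The argument is essentially bookkeeping; the one point that deserves care — and is the only place something could go wrong — is the claim that $\Gamma$-stability of $\Lambda_\alpha$ forces $\gamma$ to preserve all the $p$-adic norms $\lVert\cdot\rVert_p$. This is immediate once one knows that the adelic norm was chosen so that $\lVert\cdot\rVert_p$ is the gauge of $\Lambda_\alpha\otimes\mathbb{Z}_p$ for all but finitely many $p$, and that $\Gamma$ stabilizing the global lattice $\Lambda_\alpha$ implies it stabilizes each local completion $\Lambda_\alpha\otimes\mathbb{Z}_p$; the only mild subtlety is at the finitely many "bad" primes where the adelic norm might not be exactly the lattice gauge, but there one simply absorbs the bounded discrepancy into $q_{c,\alpha}$ (equivalently, one defines $\Lambda_\alpha$ from the adelic norm as in the excerpt, in which case $\Lambda_\alpha\otimes\mathbb{Z}_p$ \emph{is} the unit ball of $\lVert\cdot\rVert_p$ by construction and no discrepancy arises). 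I expect no genuine obstacle here.
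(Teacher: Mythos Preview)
Your proposal is correct and follows the same approach as the paper: choose $q_{c,\alpha}$ so that $q_{c,\alpha}\,c\,e_{\varpi_\alpha}$ is primitive in $\Lambda_\alpha$, then use that $\Gamma$ stabilizes $\Lambda_\alpha$ to conclude that $q_{c,\alpha}\,\gamma c\,e_{\varpi_\alpha}$ remains primitive for every $\gamma\in\Gamma$. The paper's proof is two sentences to this effect; your additional discussion of the $p$-adic picture and the non-issue at bad primes is accurate but more than the paper spells out.
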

\begin{proof}
	Let \(q_{c,\alpha }\in\mathbb{Q}\) be such that \(q_{c,\alpha} c e_{\varpi_\alpha} \) is a primitive element in the lattice \(\Lambda_\alpha \).
	Then, for any \(\gamma \in \Gamma \), the vector \(q_{c,\alpha }\gamma c e_{\varpi_\alpha }\) is a primitive element of \(\Lambda_\alpha \) proportional to \(\iota_\alpha(v)\), and the formula follows.
\end{proof}

\subsection{Counting and equidistribution}
\label{ss:ce}

We now explain how the argument used in \cite{msg} to count points of bounded height in flag varieties can be used to derive Theorem~\ref{th:count}.
The main ingredient in the proof is an equidistribution result for translates of horospherical measures in the space of lattices that appeared as \cite[Theorem~1 (ii)]{msg} in the case where the ambient group \( G \) is almost simple.
As explained in \cite[Theorem 1.4]{shi}, such an equidistribution holds in a more general setting, and implies the following. 

\begin{theorem}[Equidistribution of translated periodic orbits]
	\label{th:equi}
	Let \( G \) be a semisimple \( \mathbb{Q} \)-group without compact factors and \( \Gamma = G(\mathbb{Z}) \) be an arithmetic subgroup.
	Let \( P \) be a parabolic \( \mathbb{Q} \)-subgroup defined by a subset \( \theta \subset \Pi \) and set
	\begin{equation*}
		L = \bigcap_{\alpha \in \Pi \setminus \theta }\mathrm{Stab}_G e_{\varpi_\alpha}.
	\end{equation*}
	Denote by $\mathrm{m}_{G/\Gamma}$ (resp. \( \mathrm{m}_{L/(\Gamma\cap L)} \)) the probability Haar measure on $G/\Gamma$ (resp. on \( L/(\Gamma\cap L) \)). For \( y \in \mathfrak{a}_\theta \), let \[
	\left\lfloor y \right\rfloor = \min_{\alpha \in \Pi \setminus \theta} \varpi_\alpha(y). 
	\]
Then  the translated measures \( (e^y)_* {\mathrm{m}_{L/(\Gamma\cap L)}} \) equidistribute in \( G/\Gamma \) as long as $\left\lfloor y \right\rfloor\to+\infty$, namely for every \( \varphi \in \mathcal{C}_c^\infty(G/\Gamma) \),
\[\int_{L/(\Gamma\cap L)} \varphi(e^y l \Gamma)\,d\mathrm{m}_{L/(\Gamma\cap L)}(l) \sim_{\left\lfloor y \right\rfloor\to+\infty} \int_{G/ \Gamma} \varphi d\mathrm{m}_{G/\Gamma}.\] 
\end{theorem}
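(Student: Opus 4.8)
The plan is to deduce the statement from the equidistribution theorems of Mohammadi--Salehi Golsefidy \cite[Theorem~1(ii)]{msg} (when $G$ is almost $\mathbb{Q}$-simple) and Shi \cite[Theorem~1.4]{shi} (in general); the real content is to match our objects to the hypotheses of those results. First I would unwind the definition of $L$. For $\alpha \in \Pi \setminus \theta$ the line $[e_{\varpi_\alpha}]$ is stable under $P$, so $\varpi_\alpha$ defines a $\mathbb{Q}$-character of $P$ (indeed of the maximal parabolic $\mathrm{Stab}_G[e_{\varpi_\alpha}] \supseteq P$), and $\mathrm{Stab}_G(e_{\varpi_\alpha})$ is its kernel; intersecting over $\alpha \in \Pi\setminus\theta$ shows that $L$ is the common kernel of all $\mathbb{Q}$-characters of $P$. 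In particular $L$ is normal in $P$, contains the unipotent radical $U$ of $P$, and decomposes as $L = M \ltimes U$ where $M$ is the semisimple part of a Levi subgroup of $P$. Since $L$ carries no nontrivial $\mathbb{Q}$-character, the theorem of Borel--Harish-Chandra gives that $\Gamma \cap L$ is a lattice in $L(\mathbb{R})$ and that the orbit $L\Gamma/\Gamma$ is closed, so $\mathrm{m}_{L/(\Gamma\cap L)}$ is a well-defined periodic probability measure on $G/\Gamma$; and as $A = T(\mathbb{R})^\circ \subset P$ normalizes $L$, the pushforward $(e^y)_* \mathrm{m}_{L/(\Gamma\cap L)}$ is exactly the $e^y$-translate of this periodic orbit measure. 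Thus what must be proved is the equidistribution of these translates as $\lfloor y \rfloor \to +\infty$.

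Next I would reduce to the case where $G$ is almost $\mathbb{Q}$-simple, the setting of \cite{msg}; alternatively one may quote the extension \cite[Theorem~1.4]{shi} directly for semisimple $G$ without compact factors. For the reduction, write $G$ as the almost-direct product $G = G_1 \cdots G_r$ of its almost-$\mathbb{Q}$-simple factors; then $\Pi$, $\theta$, $P$, $L$, $\mathfrak{a}_\theta$ decompose accordingly, $y = \sum_i y_i$, and $\lfloor y \rfloor = \min_i \lfloor y_i \rfloor$ because each $\varpi_\alpha$ is supported on a single factor. The periodic measure and the Haar measure factor as products (up to a harmless finite-index discrepancy between an almost-direct and a direct product, which does not affect equidistribution), and equidistribution in each factor yields it for the product. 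This is also the step where the hypothesis that $G$ has no compact factor is used: a compact factor $G_c$ is semisimple and centralized by $T$, hence lies in $M \subset L$, and its periodic orbit measure, being fixed by every $e^y$, would obstruct equidistribution towards $\mathrm{m}_{G/\Gamma}$.

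The heart of the argument is then the verification that the hypothesis $\lfloor y \rfloor \to +\infty$ is the ``expanding cone'' condition needed in \cite[Theorem~1(ii)]{msg} and \cite[Theorem~1.4]{shi}. By the very definitions one has $\{\, y \in \mathfrak{a}_\theta : \lfloor y \rfloor > R \,\} \subset \mathring{C}_{\mathrm{eff}}^\vee$ for all $R > 0$, and these sets exhaust $\mathring{C}_{\mathrm{eff}}^\vee$ as $R \to +\infty$; so $\lfloor y \rfloor \to +\infty$ forces $y$ to go to infinity while remaining, eventually, in the open dual effective cone inside $\mathfrak{a}_\theta$. It then remains to check that $\mathfrak{a}_\theta \cap \mathring{C}_{\mathrm{eff}}^\vee$ is contained in the cone of directions along which the cited theorems grant equidistribution of the $L$-periodic measure, a computation with the roots occurring in $\mathfrak{u}$ and the relation between the $\varpi_\alpha$ and the coordinates of $y$ in $\mathfrak{a}_\theta$; with that inclusion in hand the theorem follows at once. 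I expect this last check to be the main obstacle: when $y$ approaches a wall of $\mathring{C}_{\mathrm{eff}}^\vee$, $e^y$ need no longer expand the whole of $U$ (it may even contract part of it), so one cannot appeal to the classical Eskin--McMullen \cite{EM} / Duke--Rudnick--Sarnak \cite{DRS} equidistribution statement and genuinely needs Shi's expanding-cone refinement. Besides this, some routine bookkeeping is required: identifying $L$ with the subgroup for which \cite{msg,shi} are stated, reconciling the various commensurable arithmetic subgroups ($G(\mathbb{Z})$, the group $\Gamma$ stabilizing all the lattices $\Lambda_\alpha$ used elsewhere in the paper, and the lattice appearing in the cited theorems — commensurability does not affect equidistribution), and matching the normalization of the translating directions.
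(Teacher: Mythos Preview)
Your proposal is correct and takes essentially the same approach as the paper: the paper does not give an independent proof of this theorem but simply cites \cite[Theorem~1(ii)]{msg} for the almost-$\mathbb{Q}$-simple case and \cite[Theorem~1.4]{shi} for the general semisimple case, exactly as you do. Your sketch is in fact more detailed than the paper's treatment, which leaves the identification of $L$, the lattice property of $\Gamma\cap L$, and the matching of the expanding-cone condition entirely implicit.
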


In Section~\ref{sec:zoom}, when we study the asymptotic behavior of zooming measures, we shall state an effective version of this asymptotic.

\begin{proof}[Proof of Theorem~\ref{th:count}]
It follows from \eqref{eq:union} that it is enough to study rational points in each orbit \(\Gamma c P\), \(c \in C\).
So we fix \(c\) and count the number of rational points \(v \in \Gamma c P\) such that
\[
			\mathbf{h}(v) \in \mathcal{D}_t.
\]
Write \( v = \gamma c P = c \gamma^c P \), where \( \gamma^c = c^{-1} \gamma c \), and note that the elements of \(\{c \gamma^c P\mid\gamma\in \Gamma\} \) are in one-to-one correspondence with cosets in \(\Gamma^c /(\Gamma^c \cap P)\), where \( \Gamma^c = c^{-1}\Gamma c \).
From Lemma~\ref{lm:halpha}, one has for each \(\alpha \) in \(\Pi \setminus \theta \),
\[
	\mathrm{h}_\alpha(c \gamma^c P) = \log \lVert q_{c,\alpha }c \gamma^c e_{\varpi_\alpha} \rVert.
\]
For \(g\) in \(G\), define \( \mathbf{h}^{(c)}(g) \in \mathfrak{a}_\theta \) as the unique element such that
\[
	\forall \alpha \in \Pi \setminus \theta, \quad \langle \varpi_\alpha, \mathbf{h}^{(c)}(g) \rangle = \log \lVert q_{c,\alpha} c g e_{\varpi_\alpha }\rVert.
\]
Recall that
\[
	L = \bigcap_{\alpha \in \Pi \setminus \theta }\mathrm{Stab}_G e_{\varpi_\alpha}.
\]
Note that \(\mathbf{h}^{(c)}(g)\) only depends on \(g\) modulo \(L\) and therefore the function \(\mathbf{h}^{(c)}\) is well defined on the quotient \(G/L\).
Let us record some useful observations:
\begin{itemize}
	\item \( \Gamma^c\cap L \) is a lattice in \( L \); the Haar probability measure on \( L/(\Gamma^c\cap L) \) will be denoted \( \mathrm{m}_{L/(\Gamma^c\cap L)} \).
	\item \(\Gamma^c\cap L\) has finite index in \(\Gamma^c \cap P\); we let \( N_c = [\Gamma^c \cap P:\Gamma^c \cap L] \).
	\item We normalize the Haar measures on \( G \) and \( K \) so that \( G/\Gamma^c \) and \( K \) have volume \( 1 \).
		By~\cite[Theorem~8.32, Proposition~8.43]{knapp_lgbi}, one may then normalize the Haar measure on \( L \) so that the following formula holds for any continuous function \( f \) with compact support on \( G \):
		\[
			\int_G f(g)\,dg = \int_K \int_{\mathfrak{a}_\theta } \int_L f(k e^y l) e^{ \langle \varrho_X, y \rangle }\,dk\,dy\,dl.
		\]
	\item We may normalize the Haar measure on \( G/L \) so that for any continuous function \( \varphi \) with compact support on \( G/L \),
		\[
			\int_{G/L} \varphi = \int_K \int_{\mathfrak{a}_\theta } \varphi(ke^yL)e^{ \langle \varrho_X, y \rangle }\, dk\,dy. 
		\]
\end{itemize}
Write
\begin{align*}
	\#\{ v \in \Gamma c P\ |\ \mathbf{h}(v) \in \mathcal{D}_t\}
	& = \sum_{\gamma^c \in \Gamma^c/(\Gamma^c\cap P)} \mathbbm{1}_{\{\mathbf{h}(c \gamma^c P) \in \mathcal{D}_t\}} \\
	& = \frac{1}{N_c}\sum_{\gamma^c \in \Gamma^c/(\Gamma^c\cap L)} \mathbbm{1}_{\{\mathbf{h}^{(c)}(\gamma^c) \in \mathcal{D}_t\}}.
\end{align*}
Then, for \(t>0\), consider the function \(\Phi\) on \(G/\Gamma^c\) defined by
\[
	\Phi_t(g \Gamma^c)
	= \sum_{\gamma^c \in \Gamma^c/(\Gamma^c\cap L)} \mathbbm{1}_{\{\mathbf{h}^{(c)}(g \gamma^c) \in \mathcal{D}_t\}}.
\]
Our goal is to evaluate \(\Phi_t\) at the identity coset \(\Gamma^c \), as \[ \Phi_t(\Gamma^c) = \#\{ v \in \Gamma c P\ |\ \mathbf{h}(v) \in \mathcal{D}_t\}.\]
The inner product of \(\Phi_t\) with a smooth compactly supported function \(\psi\) on \(G/\Gamma^c\) can be rewritten
\begin{align*}
	\langle \psi, \Phi_t \rangle
	& = \int_{G/\Gamma^c} \sum_{\gamma^c \in \Gamma^c/(\Gamma^c\cap L)} \psi(g \Gamma^c)\,\mathbbm{1}_{\{\mathbf{h}^{(c)}(g \gamma^c) \in \mathcal{D}_t\}} \,d\mathrm{m}_{G/ \Gamma^c}(g \Gamma^c)\\
	& = \int_{G/(\Gamma^c\cap L)} \psi(g \Gamma^c) \mathbbm{1}_{\{\mathbf{h}^{(c)}(g(\Gamma^c\cap L)) \in \mathcal{D}_t\}} \,d\mathrm{m}_{G/(\Gamma^c\cap L)}(g(\Gamma^c\cap L)).
\end{align*}
By uniqueness of the Haar measure on \(G/(\Gamma^c\cap L)\), there exists a constant \(\kappa_{c}>0\) such that
\begin{align*}
	\langle \psi, \Phi_t \rangle 
	& = \kappa_{c} \int_{G/L} \mathbbm{1}_{\{\mathbf{h}^{(c)}(gL) \in \mathcal{D}_t\}} \left(\int_{L/(\Gamma^c \cap L)} \psi(g l \Gamma^c)\,d\mathrm{m}_{L/(\Gamma^c\cap L)}(l)\right)\,d\mathrm{m}_{G/L}(gL)\\
	& = \kappa_c \int_K \int_{\mathfrak{a}_\theta } \mathbbm{1}_{\{\mathbf{h}^{(c)}(ke^y) \in \mathcal{D}_t\}} \left(\int_{L/(\Gamma^c\cap L)} \psi(ke^yl \Gamma^c)\, d\mathrm{m}_{L/(\Gamma^c\cap L)}(l)\right) e^{ \langle \varrho_X, y \rangle } \, dy\, dk.
\end{align*}
Since the Euclidean norms are $K$-invarient, recalling that \( \mathcal{D}_t = tu + \mathcal{D}_0 \), the condition \( \mathbf{h}^{(c)}(ke^y) \in \mathcal{D}_t \) can be rewritten
\[
	y \in -q_c + t u + \mathcal{D}_0,
\]
where \( q_c \) is the unique vector in \( \mathfrak{a}_\theta  \) such that  \[
 \forall \alpha\in\Pi\setminus\theta,\quad   \langle \varpi_\alpha,q_c\rangle = \log \lVert q_{c,\alpha }ce_{\varpi_\alpha }\rVert .\]
Our assumption that \( u \in \mathring{C}_{\mathrm{eff}}^\vee \) is equivalent to \( \left\lfloor u \right\rfloor >0 \) and therefore \( \left\lfloor y \right\rfloor\) tends to \( +\infty \) uniformly for all \( y \) in \( \mathcal{D}_t \), as \( t \) tends to \( +\infty \).
By Theorem~\ref{th:equi}, this implies that the inner integral converges uniformly to \( \int_{G/\Gamma^c} \psi d\mathrm{m}_{G/\Gamma^c}\), so that
\begin{align*}
	\langle \psi, \Phi_t \rangle
	& \sim_{t\to+\infty} \kappa_c \cdot \nu(-q_c + tu + \mathcal{D}_0) \cdot \left( \int_{G/\Gamma^c} \psi d\mathrm{m}_{G/\Gamma^c}\right) \\
	& = \kappa_c \cdot e^{ \langle \varrho_X, tu - q_c \rangle } \cdot \nu(\mathcal{D}_0) \cdot \left( \int_{G/\Gamma^c} \psi d\mathrm{m}_{G/\Gamma^c}\right) .
\end{align*}
Taking \(\psi = \psi_\varepsilon \) to be a smooth function supported on \( \mathbf{B}_{G/\Gamma^c}(\Gamma^c,\varepsilon) \) satisfying \( \int_{G/\Gamma^c}\psi_\varepsilon d\mathrm{m}_{G/\Gamma^c}=1 \), and letting \( \varepsilon \) tend to zero, we find
\[
\Phi_t(\Gamma^c) \sim_{t\to+\infty} \kappa_c \cdot e^{ \langle \varrho_X, tu - q_c \rangle } \cdot \nu(\mathcal{D}_0).
\]
We may then sum over \( c \) in \( C \) to get
\[
		\# X(\mathbb{Q})_{\mathbf{h} \in \mathcal{D}_t}
		\sim_{t\to+\infty}
		\kappa_X \cdot \nu(\mathcal{D}_0) \cdot e^{ t \langle \varrho_X, u\rangle },
	\]
	with \begin{equation}\label{eq:kappa}
		\kappa_X = \sum_{c \in C} e^{ - \langle \varrho_X, q_c \rangle } \frac{\kappa_c}{N_c}. \qedhere
	\end{equation}
\end{proof}

\begin{remark}
	The proof given above can be readily adapted to also give a counting statement for rational points of large height in a fixed open subset of \( X(\mathbb{R}) \).
	Let \( \mu_X \) denote the unique \( K \)-invariant probability measure on \( X(\mathbb{R}) \) absolutely continuous with respect to the Lebesgue measure.
	Then, for every open subset \( \mathcal{O} \subset X(\mathbb{R})\) whose boundary satisfies \( \mu_X(\partial \mathcal{O})=0 \), one has
	\[
		\#\left\{ v \in \mathcal{O} \cap X(\mathbb{Q})\ |\ \mathbf{h}(v) \in \mathcal{D}_t \right\}
		\sim_{t\to+\infty}
		\kappa_X \cdot \mu_X(\mathcal{O}) \cdot \nu(\mathcal{D}_0) \cdot e^{ t \langle \varrho_X, u\rangle }.
	\]
\end{remark}

\section{Asymptotic behavior of zooming measures}
\label{sec:zoom}

The proof of Theorem~\ref{th:maini} from the introduction is very similar to the one given above for counting rational points with large height in \( X(\mathbb{Q}) \).
But in order to make the similarities more apparent, it is natural to modify slightly the definition of zooming measures.
This will also allow us to study general flag varieties, without any assumption on the \( \mathbb{Q} \)-rank.

\subsection{Rescaling on the tangent space}
\label{ss:rescale}

Fix a maximal compact subgroup \( K \) in \( G \), and for each $x\in X(\mathbb{R})$, choose an element \( s_x \) in \( K \) such that
\[
	x = s_xP.
\]
The tangent space to \( X = G/P \) at the base point \( x_0 = P \) is naturally identified with the quotient Lie algebra \( \mathfrak{g}/ \mathfrak{p} \simeq \mathfrak{u}^- \), and using the action of \( s_x \) on \( X \), one can further identify
\[
	T_xX \simeq T_{x_0}X \simeq \mathfrak{u}^-.
\]
Consider the big Schubert cell
\(
	U_x = \{ s_xe^ZP \mid Z \in \mathfrak{u}^- \}.
\)
It is an open neighborhood of \( x \) in \( X( \mathbb{R}) \).
To study the local distribution of rational points on \( X \), we shall use the projection
\[
	\begin{array}{lrcl}
		p_x\colon & U_x		 &\longrightarrow	& \mathfrak{u}^-\simeq T_x X
	\end{array}
\]
defined as the inverse of the diffeomorphism
\[
	\begin{array}{rcc}
		\mathfrak{u}^-	 &\longrightarrow	& U_x \\
		Z 		& \longmapsto & s_xe^ZP.
	\end{array}
\]
Note that \( p_x \) satisfies \( p_x(x)=0 \) and \( T_xp_x = \mathrm{Id}_{T_xX} \).
The rescaling on \( T_xX \simeq \mathfrak{u}^- \) is given by a one-parameter semigroup of dilations \( (a_t)_{t \in \mathbb{R}} \) that we now define.
For that, we use the direct sum decomposition (cf. \cite[\S2.2]{saxce_hdr})
\[
\mathfrak{u}^- = \bigoplus_{k\geq 1} m_k,
\]
where $m_k$ is the sum of all roots subspaces $\mathfrak{g}_\alpha$ such that $-\alpha$ is a positive root containing exactly $k$ elements not in $\theta$ in its decomposition into simple roots with multiplicity.
Then, for any element \( Z \) written as \(Z = \sum_{k\geq 1} z_k\) according to this decomposition, and for $t\in\mathbb{R}$, we let
\begin{equation}\label{eq:rescaling}
		a_t \cdot Z = \sum_{k\geq 1} e^{kt} z_k.
\end{equation}

\begin{remark}
	If \( \mathfrak{u}^- = m_1 \), then \( a_t \) simply acts by scalar multiplication by \( e^t \) on \( T_xX \).
	This happens if and only if the unipotent radical of \( P \) is abelian, which is the setting of Theorem~\ref{th:maini} in the introduction.
\end{remark}

\subsection{Zooming measures}
\label{ss:zm}

For our new definition of zooming measures, we not only use the rescaling map given in the above paragraph, but also control the multiheight using a family of compact subsets \( (\mathcal{D}_t) \), as in Section~\ref{sec:hc}.

\begin{definition}[Zooming measures -- general case]
Given a family of compact subsets \(\mathcal{D}_t \) in \( C_{\mathrm{eff}}^\vee \), the associated zooming measure with zoom factor $\tau>0$ at time \( t \), centered at \( x \), is the measure on \(T_xX\simeq \mathfrak{u}^-\) defined as
	\[
		\mu_{x, \tau, t} = \sum_{\substack{v\in X(\mathbb{Q})\cap U_x\\ \mathbf{h}(v) \in \mathcal{D}_{t}}} \delta_{a_{\tau t} \cdot p_x(v)}.
	\]
\end{definition}

The goal of this article is to find conditions on the family of subsets \( (\mathcal{D}_t) \) and the zoom factor \( \tau  \) under which one can describe the asymptotic behavior of the zooming measure centered at a generic point.
From the counting estimate  Theorem~\ref{th:count} and the fact that the dilation \( a_t \) scales the volume on the tangent space by a factor \( e^{-\tau t \langle \varrho_X,Y \rangle } \), where \( Y \in \mathfrak{a}_\theta \) is the unique element such that
\begin{equation}\label{eq:Y}
	\forall \alpha \in \Pi\setminus \theta, \quad \langle \alpha, Y \rangle = 1,
\end{equation}
it is natural to expect that for some \( \alpha_X>0 \), for small enough \( \tau>0 \), for all \( f \in \mathcal{C}_c(\mathfrak{u}^-) \),
\[
\mu_{x,\tau,t}(f) \sim_{t\to+\infty} \alpha_X \cdot e^{ -\tau t \langle \varrho_X, Y \rangle } \cdot \nu(\mathcal{D}_t) \cdot \left(\int_{ \mathfrak{u}^-} f\,d\mathrm{m}_{\mathfrak{u}^-}\right).
\]
We shall see in particular that such an estimate holds for generic points when the parabolic \( P \) is maximal and \( \mathcal{D}_t \) is an interval starting at \( 0 \) and of length proportional to \( t \).
This will allow us to derive Theorem~\ref{th:maini}.

\subsection{The zooming flow}

We now interpret rescaling \eqref{eq:rescaling} on the tangent space of \(X\) as the action of a well-chosen one-parameter multiplicative subgroup \((a_t)_{t \in \mathbb{R}}\) in \(G\). 
The subgroup \((a_t)_{t \in \mathbb{R}}\) is chosen so that \(\mathrm{Ad}\,a_t\) acts on \(\mathfrak{u}^-\) exactly as the semigroup of dilations introduced in paragraph~\ref{ss:rescale}.
More explicitly, we set $$a_t = e^{-tY},$$ where the element \( Y \in \mathfrak{a}_\theta \) is defined by \eqref{eq:Y}
We shall relate the asymptotic behavior of zooming measures to the dynamics of the action of \((a_t)_{t \in \mathbb{R}}\) on the finite-volume homogeneous space \(G/\Gamma \).
\begin{remark}
	This interpretation of the zooming flow as the adjoint action of some one-parameter subgroup is central in our approach to local distribution of rational points.
	Such an interpretation is only possible if the manifold \(X(\mathbb{R})\) is endowed with its Carnot-Carathéodory metric,
	which is not Riemannian in general, and this is the reason why we chose to place ourselves in that setting.
\end{remark}

In order to have a good understanding of the local distribution of rational points near a given point \(x \in X(\mathbb{R})\), we shall need some control on the orbit \((a_t s_x \Gamma)_{t > 0} \) in the space \(G/\Gamma \).
Precisely, the condition we shall request is that \((a_t s_x \Gamma)_{t>0}\) does not escape at positive speed in \(G/\Gamma \).
To make this statement rigorous, let us briefly recall some results from reduction theory for the space \(G/\Gamma \).

Let \(P_0 \subset P\) be a minimal \(\mathbb{Q}\)-parabolic in \(G\), \(U_0\) its unipotent radical, \(T \subset P_0\) a maximal split \(\mathbb{Q}\)-torus, \(A=T(\mathbb{R})^\circ\) the connected component of \(T(\mathbb{R})\), and \(M\) a maximal anisotropic \(\mathbb{Q}\)-subgroup of the centralizer \(Z(T)^\circ\).
Write $\mathfrak{a}=\mathrm{Lie}(A)$ and $\Pi\subset\mathfrak{a}^*$ for a basis of the root system of \((G,T)\) for an order associated to \(P_0\).
The \emph{positive Weyl chamber} \(\mathfrak{a}^+\subset\mathfrak{a}\) is the convex polytope defined by
\begin{equation}\label{eq:a+}
	\mathfrak{a}^+ = \{z \in \mathfrak{a}\ |\ \forall \alpha \in \Pi,\ \langle \alpha, z \rangle \geq 0 \}.
\end{equation}
Finally, fix a maximal compact subgroup \(K\) in \(G\), and let \(C_0 \subset G(\mathbb{Q})\) denote a finite set of representatives for \(\Gamma \backslash G(\mathbb{Q})/P_0\).
The fundamental results of Borel and Harish-Chandra's reduction theory \cite[Théorème~15.5 and Proposition~15.6]{borel_iga} show that there exist a compact neighborhood \(\omega \) of the identity in \(MU_0\) and a constant \(R\geq 0\) such that any element in \(G\) admits a Siegel decomposition
\[
g = k e^{-s} n c \gamma,
\]
with \(k \in K\), \(n \in \omega \), \(c \in C_0\) and \(\gamma \in \Gamma \)
and \(s \in \mathfrak{a}\) satisfying \(\operatorname{d}(s,\mathfrak{a}^+) \leq R\).
Such a decomposition is not unique in general, but the element \(s\) is uniquely determined up to a bounded element in \(\mathfrak{a}\); we shall denote it by \(s(g)\).

\begin{definition}[Zero rate of escape]
	An orbit \((a_t g \Gamma)_{t>0}\) is said to have \emph{zero rate of escape} in the space \(G/\Gamma \) if
	\(
	\lim_{t\to+\infty} \frac{1}{t}s(a_tg) = 0. 
	\)
\end{definition}

\begin{remark}
	This definition does not depend on the choices of the compact group \(K\) and the compact subset \(\omega \subset MU_0\) made for the construction of the Siegel decomposition.
\end{remark}

For our study of zooming measures, we shall use that the condition of zero rate of escape is generically satisfied, both for random elements chosen according to the Lebesgue measure, and for algebraic points outside rational constaints.
We summarize these facts in the proposition below, extracted from~\cite{saxce_hdr}.

\begin{proposition}[Generic zero rate of escape]
	\label{pr:zerorate}
	For \(x \in X(\mathbb{R})\), let \(s_x \in G\) be such that \(x = s_xP\).
	\begin{enumerate}
		\item For almost every \(x \in X(\mathbb{R})\) in the Lebesgue measure, the orbit \((a_ts_x \Gamma)_{t>0}\) has zero rate of escape in \(G/ \Gamma \).
		\item For all \(x \in X(\overline{\mathbb{Q}})\) not contained in any proper rational Schubert subvariety, the orbit \((a_t s_x \Gamma)_{t>0}\) has zero rate of escape in \(G/ \Gamma \).
	\end{enumerate}
\end{proposition}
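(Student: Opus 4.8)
The plan is to reduce both statements to a single geometric criterion: the orbit $(a_t s_x\Gamma)_{t>0}$ has zero rate of escape in $G/\Gamma$ if and only if, for every $\alpha\in\Pi$ and every $c\in C_0$, the quantity $\tfrac1t\log\bigl\|q_{c,\alpha}a_t s_x c\,e_{\varpi_\alpha}\bigr\|$ stays bounded above (away from a linear rate). This is exactly the kind of statement handled in~\cite{saxce_hdr}, so morally I am just unpacking the definition of the Siegel decomposition in terms of the heights $\mathrm h_\alpha$ and invoking the relevant results there. Concretely, the element $s(g)$ in the Siegel decomposition can be read off, up to a bounded error, from the collection of numbers $\langle\varpi_\alpha, s(g)\rangle = \log\|g c\,e_{\varpi_\alpha}\|$ over the finitely many $c\in C_0$ and $\alpha\in\Pi$ (using $K$-invariance of the norm to kill the $K$-factor, and boundedness of $\omega$ and $C_0$ to absorb the $MU_0 c$-factor). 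Hence zero rate of escape is equivalent to: for all such $\alpha$ and $c$, $\tfrac1t\log\|a_t s_x c\,e_{\varpi_\alpha}\| \to $ (no positive linear term), which after writing $a_t = e^{-tY}$ and decomposing $c\,e_{\varpi_\alpha}$ along the weight spaces of $Y$ acting on $V_{\varpi_\alpha}$, becomes a statement about how fast $\log\mathrm d(s_x c\,e_{\varpi_\alpha}, Z_{\ge 0})$ can grow, where $Z_{\ge 0}$ is the sum of non-negative $Y$-weight spaces.

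For part (2), the algebraic point $x\in X(\overline{\mathbb Q})$ not lying in any proper rational Schubert variety: here $s_x\in K$ is a fixed matrix with algebraic entries, and the vectors $s_x c\,e_{\varpi_\alpha}$ are algebraic. The failure of zero rate of escape would force, for some $c$ and $\alpha$, a sequence of times $t_i\to\infty$ along which $s_x c\,e_{\varpi_\alpha}$ is exponentially close to the "unstable-or-neutral" subspace for $a_{t_i}$ — but being within distance $e^{-\delta t_i}$ of a proper rational subspace at infinitely many scales is, by a standard effective-algebraicity (Liouville / height) argument, impossible for an algebraic vector unless it actually lies in that rational subspace; and the rational subspaces in question are precisely cut out by the rational Schubert subvarieties through $x$. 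This is the content of~\cite{saxce_hdr}, so I would cite the corresponding statement there rather than redo the Diophantine estimate. The key point to isolate is the dictionary: "$x$ avoids all proper rational Schubert subvarieties" $\iff$ "$s_x c\,e_{\varpi_\alpha}$ has non-zero component on the top $Y$-weight space, for the relevant Weyl translates", which is where the non-degeneracy hypothesis is used.

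For part (1), the almost-everywhere statement: one uses that the map $x\mapsto s_x$ can be taken (locally) to depend smoothly on $x$, say $s_x = s_{x_0}e^{Z(x)}$ with $Z(x)\in\mathfrak u^-$ a local chart, and then the orbit $(a_t s_x\Gamma)$ escapes at positive rate only if $Z(x)$ lies in a certain exceptional set. The natural tool is the Borel--Cantelli / Khintchine-type argument: for each $\delta>0$, the set of $x$ for which $\tfrac1t s(a_t s_x) \ge \delta$ for arbitrarily large $t$ is contained in a $\limsup$ of shrinking neighborhoods of the "bad" locus (rational Schubert data, as in part (2)), whose Lebesgue measures are summable along a geometric sequence of times; hence the bad set has measure zero, and intersecting over a sequence $\delta\to 0$ gives the claim. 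Again this is precisely proved in~\cite{saxce_hdr} (it underlies the very existence of the generic exponent $\beta_X$ quoted in the introduction), so I would extract it as a black box.

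The main obstacle is the first reduction step — making precise and rigorous the equivalence between "zero rate of escape of $(a_t s_x\Gamma)$" and "sub-linear growth of the logarithmic heights $\mathrm h_\alpha$ along the orbit". The subtlety is that the Siegel decomposition is not unique and $s(g)$ is only well-defined up to a bounded additive error, and one must check that the finitely many functionals $\langle\varpi_\alpha,\cdot\rangle$, $\alpha\in\Pi$, together with the finitely many translates by $C_0$, genuinely determine $s(g)$ up to such an error — this uses that $\{\varpi_\alpha\}$ spans $\mathfrak a^*$ and that $\mathfrak a^+$ is where $s$ lives up to distance $R$. Once that dictionary is in hand, both parts follow from the results of~\cite{saxce_hdr} with essentially no further work.
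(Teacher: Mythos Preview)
Your plan is broadly sound and ends up citing the same source~\cite{saxce_hdr} for both parts, but it takes a longer route than the paper does, and in one place it underestimates the required input.

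For part~(1), you propose to pass through the heights dictionary and then run Borel--Cantelli on shrinking Lebesgue neighborhoods of the rational Schubert loci in~\(X\). This works, but the paper's argument is considerably shorter and avoids the dictionary entirely: one observes that the Haar measure on \(G/\Gamma\) is \(a_t\)-invariant and satisfies
\[
\mathrm{m}_{G/\Gamma}\bigl(\{\,g\Gamma : \lVert s(g)\rVert \geq \varepsilon t\,\}\bigr) \lesssim e^{-\varepsilon\tau t}
\]
for some \(\tau>0\) (from reduction theory, \cite[Proposition~3.1.1]{saxce_hdr}). Borel--Cantelli along a geometric sequence of times then gives zero rate of escape for almost every \(g\in G\), and one transfers to almost every \(x\in X\) after noting that the conclusion is independent of the lift \(s_x\). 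No Schubert geometry enters. Your approach trades this direct ergodic argument for a Dani-correspondence-style computation on \(X\); that buys you a more explicit description of the exceptional set, but at the cost of the reduction step you flag as the ``main obstacle'', which the paper simply does not need.

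For part~(2), your phrase ``standard effective-algebraicity (Liouville / height) argument'' is where you should be careful. A Liouville-type bound only controls approximation by a \emph{fixed} rational subspace, whereas positive rate of escape produces exponentially good approximations by a sequence of rational Schubert data that may vary with \(t\). Ruling this out for an algebraic point genuinely requires a Schmidt subspace theorem (in the parametric form of \cite[Corollaire~5.2.3]{saxce_hdr}, which is what the paper invokes). Since you already intend to cite~\cite{saxce_hdr} as a black box this is not a fatal gap in the plan, but you should name the correct tool rather than Liouville.
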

\begin{proof}
	One should first note that the conclusion of the proposition is independent of the choice of the element \(s_x\) in \(G\) such that \(x = s_xP\).
	It is therefore enough to prove that for almost every \(g\) in \(G\) (for the first part), or every \(g\) in \(G(\overline{\mathbb{Q}})\) outside any proper rational Bruhat subvariety of \(G\) (for the second part), the orbit \((a_tg \Gamma)_{t>0}\) has zero rate of escape.
	The first part is an easy application of the Borel-Cantelli lemma, together with the fact that the Haar measure \(\mathrm{m}_{G/ \Gamma }\) on \(G/ \Gamma \) is preserved by \(a_t\) and satisfies, for some \(\tau>0\),
	\[
\mathrm{m}_{G/\Gamma }(\{g \Gamma \ |\ \lVert s(g)\rVert \geq \varepsilon t \} 
\lesssim e^{-\varepsilon \tau t},
\]
which can be seen from reduction theory and the construction of fundamental domains for \(G/\Gamma \), see \cite[Proposition~3.1.1]{saxce_hdr} for instance.
The second is a consequence of a parametric version of Schmidt's subspace theorem, as explained in \cite[Corollaire~5.2.3]{saxce_hdr}.
\end{proof}

\subsection{Convergence of zooming measures}
Our argument is similar to the one used in the proof of Theorem~\ref{th:count}.
To begin with, we decompose the zooming measure as a finite sum
\[
	\mu_{x,\tau,t} = \sum_{c \in C} \mu_{x,\tau,t}^{(c)},
\]
where each \(\mu_{x,\tau,t}^{(c)}\) is defined by
\[
	\mu_{x, \tau, t}^{(c)} = \sum_{\substack{v\in \Gamma c P \cap U_x\\ \mathbf{h}(v) \in \mathcal{D}_{t}}} \delta_{a_{\tau t} \cdot p_x(v)}.
\]
It suffices to study the convergence of each measure \(\mu_{x,\tau,t}^{(c)}\).
So we fix some element \(c\) in the finite set \(C\) of representatives of \(\Gamma \backslash G(\mathbb{Q}) / P\) and study the asymptotics of the zooming measure \(\mu_{x,\tau,t}^{(c)}\). 
We shall be able to prove equidistribution of zooming measures if the family of compact subsets \( \mathcal{D}_t \) in \( C_{\mathrm{eff}} \) satisfies a certain counting estimate for points in lattices translated by elements of relatively small norm, which we shall call \emph{low lattices}.
To state this property, define, for \( s \in G/L \),
\[
\Lambda(s) = (\log \lVert s e_{\varpi_\alpha} \rVert)_{\alpha \in \Pi \setminus \theta},
\]
and given a bounded open subset \( \mathcal{O} \) with smooth boundary in \( \mathfrak{u}^- \) and a vector \( d_0 \) in \( \mathfrak{a}_\theta \), let
\[
\mathcal{R}_t = \left\{ s \in G/L \ \left|\ 
	\begin{array}{l}
		s \in e^{\mathcal{O}}P \\
		\Lambda(a_{\tau t}^{-1}s) \in d_0 + \mathcal{D}_t
	\end{array}\right. \right\}.
\]
The technical counting statement we shall need in order to derive equidistribution of zooming measures is the following.

\begin{definition}[Effective counting in low lattices]
	\label{def:countvol}
	We say that the family of subsets \( \mathcal{D}_t \) satisfies \emph{effective counting in low lattices} for the zoom factor \( \tau  \) if for every bounded open subset \( \mathcal{O} \) with smooth boundary in \( \mathfrak{u}^- \) and any \( d_0 \in \mathfrak{a}_\theta \), there exist constants \(\alpha_{0,d_0}\) and $\eta>0$ such that for all \(t>0\) and all $g_2$ in $G$ such that \( \lVert g_2\rVert \leq e^{\eta t}\),
\[
	\sum_{\gamma_1 \in g_2\Gamma^c/\Gamma^c\cap L} \mathbbm{1}_{\mathcal{R}_t}(\gamma_1L)
	= \alpha_{0,d_0} \cdot \mathrm{m}_{G/L}(\mathcal{R}_t)\left(1 + O(e^{-\eta t})\right).
\]
\end{definition}

\begin{remark}
As we shall see below, it is not difficult to check that the volume \( m_{G/L}(\mathcal{R}_t) \) is comparable to \( e^{-\tau t \langle \varrho_X,Y \rangle }\nu(\mathcal{D}_t) \) when \( t \) is large.
A necessary condition to have effective counting in low lattices is therefore that the sets \( \mathcal{D}_t \) have \( \nu \)-measure tending to infinity faster than \( e^{\tau t \langle \varrho_X, Y \rangle } \).
\end{remark}

Our goal is to establish the following proposition.

\begin{proposition}
	\label{pr:eqc}
	Assume the family of subsets \( \mathcal{D}_t \) satisfies effective counting in low lattices for the zoom factor \( \tau>0 \).
	Assume in addition that for \( \varepsilon>0 \), the \( e^{-\varepsilon t} \)-neighborhood of \( \mathcal{D}_t \) satisfies \[ \nu(\mathcal{D}_t+B_{\mathfrak{a}_\theta}(0,e^{-\varepsilon t})) \sim_{t\to+\infty} \nu(\mathcal{D}_t) .\]
	Then, there exists a constant \(\alpha_X\) depending only on \(X\) such that the following holds.\\
	Let \(x\in X(\mathbb{R})\) be such that \((a_t s_x \Gamma)_{t>0} \) has zero rate of escape in \(G / \Gamma \).
	Then,  for every function \( f\in \mathcal{C}_c(\mathfrak{u}^-) \),
\[
	\mu_{x,\tau,t}(f) \sim_{t\to+\infty} \alpha_X \cdot e^{- \tau t \langle \varrho_X, Y\rangle }\cdot \nu(\mathcal{D}_t) \cdot\left(\int_{\mathfrak{u}^-} f\,d\mathrm{m}_{\mathfrak{u}^-}\right).
\]
\end{proposition}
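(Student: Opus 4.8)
The plan is to mimic the proof of Theorem~\ref{th:count}, replacing the base point $\Gamma^c$ by the translated point $s_x\Gamma^c$ (or rather its image in $G/L$) and the fixed domain $\mathcal{D}_t$ by the moving, rescaled region $\mathcal{R}_t$ in $G/L$ that already incorporates the zoom factor. First I would fix $c\in C$ and rewrite $\mu^{(c)}_{x,\tau,t}(f)$ as a sum over cosets: writing $v=\gamma cP\in\Gamma cP\cap U_x$ and passing to $\Gamma^c=c^{-1}\Gamma c$, the condition $v\in U_x$ together with $\mathbf{h}(v)\in\mathcal{D}_t$ translates (after absorbing the rational normalizing factors $q_{c,\alpha}$ and the bounded vector $q_c$ into a choice of $d_0$, and using $K$-invariance of the norms to move $s_x$ around) into the statement that the corresponding element $\gamma_1 L\in G/L$ lies in $\mathcal{R}_t$, up to replacing $G/L$ by its $\Gamma^c$-translates. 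Concretely one obtains, for a test function $f\in\mathcal{C}_c(\mathfrak{u}^-)$ supported in $e^{\mathcal{O}}$, an expression of the form
\[
	\mu^{(c)}_{x,\tau,t}(f)
	= \frac{1}{N_c}\sum_{\gamma_1\in s_x\Gamma^c/(\Gamma^c\cap L)}
		f\bigl(a_{\tau t}\cdot p_x(\gamma_1 L)\bigr)\,\mathbbm{1}_{\{\mathbf{h}^{(c)}(\gamma_1)\in\mathcal{D}_t\}},
\]
the sum being supported on those $\gamma_1 L\in\mathcal{R}_t$.

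Next I would handle the weight $f(a_{\tau t}\cdot p_x(\gamma_1 L))$, which is the only genuinely new feature compared with Theorem~\ref{th:count}. The key point is that on the region $\mathcal{R}_t$ the map $s\mapsto a_{\tau t}\cdot p_x(s)$ is, in suitable coordinates on $G/L$, a smooth fibration whose fibers are the $L$-directions and transverse to which it is the rescaled chart; one partitions $\mathcal{R}_t$ into small boxes on which $f\circ(a_{\tau t}p_x)$ is essentially constant, applies the effective counting in low lattices hypothesis on each box — here $g_2$ is a translate of $s_x$, and the zero rate of escape of $(a_ts_x\Gamma)_{t>0}$ is exactly what guarantees $\lVert g_2\rVert\le e^{\eta t}$ for the relevant range, so that the error term $O(e^{-\eta t})$ is usable — and sums up. Because the boxes have size $e^{-\varepsilon t}$ in the $\mathcal{D}_t$-direction for small $\varepsilon$, the hypothesis $\nu(\mathcal{D}_t+B(0,e^{-\varepsilon t}))\sim\nu(\mathcal{D}_t)$ ensures the Riemann-sum approximation of $\int f\,d\mathrm{m}_{\mathfrak{u}^-}$ against the volume has negligible error. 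This yields
\[
	\mu^{(c)}_{x,\tau,t}(f)
	\sim_{t\to+\infty}
	\frac{\alpha_{0,d_0}}{N_c}\cdot\mathrm{m}_{G/L}(\mathcal{R}_t)\cdot
	\Bigl(\int_{\mathfrak{u}^-}f\,d\mathrm{m}_{\mathfrak{u}^-}\Bigr)\cdot(\text{Jacobian factor}),
\]
after normalizing $p_x$ so the Jacobian at $x$ is $1$.

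Then I would compute the volume $\mathrm{m}_{G/L}(\mathcal{R}_t)$ using the Iwasawa-type integration formula $\int_{G/L}\varphi=\int_K\int_{\mathfrak{a}_\theta}\varphi(ke^yL)e^{\langle\varrho_X,y\rangle}\,dy\,dk$ recalled in Section~\ref{sec:hc}, combined with the fact that the dilation $a_{\tau t}$ scales the Lebesgue measure on $\mathfrak{u}^-$ by $e^{-\tau t\langle\varrho_X,Y\rangle}$ (since $\varrho_X$ is the sum of the roots in $\mathfrak{u}$ and $a_t=e^{-tY}$, the trace of $\mathrm{Ad}\,a_{\tau t}$ on $\mathfrak{u}^-$ gives the exponent). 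A direct change of variables then gives $\mathrm{m}_{G/L}(\mathcal{R}_t)=(\text{const})\cdot e^{-\tau t\langle\varrho_X,Y\rangle}\,\nu(\mathcal{D}_t)\,(1+o(1))$. Finally I would sum over $c\in C$, collecting the constants $\alpha_{0,d_0}$, $\kappa_c$, $N_c$ and the $q_c$-shifts into a single constant $\alpha_X$ that depends only on $X$ and not on $x$ — the independence of $x$ being forced because the leading volume term is insensitive to the translation by $s_x$ once equidistribution/effective counting has been applied. The main obstacle I expect is the second step: carefully justifying that the weight $f(a_{\tau t}p_x(\gamma_1 L))$ can be decoupled from the counting, i.e. controlling the geometry of the fibration $G/L\to\mathfrak{u}^-$ on the lopsided region $\mathcal{R}_t$ uniformly in $t$, and checking that the box-decomposition errors (both the boundary of $\mathcal{R}_t$ and the variation of $f$ within a box) are genuinely of lower order — this is where the smoothness of $\partial\mathcal{O}$, the hypothesis on $\nu$-thickenings of $\mathcal{D}_t$, and the uniformity built into Definition~\ref{def:countvol} all have to be used simultaneously.
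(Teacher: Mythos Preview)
Your overall plan --- decompose into $\Gamma$-orbits, rewrite as a lattice-point count in $\mathcal{R}_t$, invoke the effective counting hypothesis, then compute $\mathrm{m}_{G/L}(\mathcal{R}_t)$ --- matches the paper's. But there is a genuine gap in your change of variables, and it is precisely the point of the argument. You put $\gamma_1\in s_x\Gamma^c/(\Gamma^c\cap L)$ and keep the zoom inside the weight $f(a_{\tau t}\cdot p_x(\gamma_1 L))$. The paper instead sets $g_t=a_{\tau t}s_x^{-1}c$ and $\gamma_1=g_t\gamma^c$, absorbing $a_{\tau t}$ into the \emph{lattice translation}. After this substitution the condition $a_{\tau t}\cdot p_x(v)\in\mathcal{O}$ becomes simply $\gamma_1\in e^{\mathcal{O}}P$ (no $a_{\tau t}$ left), the height condition becomes $\Lambda(a_{\tau t}^{-1}\gamma_1)\in d_0+\mathcal{D}_t$, and together these are exactly the defining conditions of $\mathcal{R}_t$ from Definition~\ref{def:countvol}. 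The translated lattice is now $g_t\Gamma^c=a_{\tau t}s_x^{-1}c\,\Gamma^c$, and \emph{this} is where zero rate of escape enters: it lets one choose a representative $g_2$ of that coset with $\lVert g_2\rVert\le e^{\eta t}$. In your setup $g_2$ would just be (a bounded modification of) $s_x\in K$, the hypothesis on escape would play no role, and the region carrying the zoom would be the lopsided set $a_{\tau t}^{-1}e^{\mathcal{O}}P$, which is not of the form covered by Definition~\ref{def:countvol}.

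Once the correct substitution is made, the obstacle you flag disappears: the weight $f(a_{\tau t}\cdot p_x(v))$ becomes $f$ evaluated at the $U^-$-coordinate of $\gamma_1$, so the box-decomposition is unnecessary. It suffices to take $f=\mathbbm{1}_{\mathcal{O}}$ for $\mathcal{O}\subset\mathfrak{u}^-$ with smooth boundary, and the effective counting hypothesis applies in one shot to give $\mu_{x,\tau,t}^{(c)}(\mathcal{O})=\frac{1}{N_c}F_t(g_2\Gamma^c)\sim\frac{\alpha_{0,c}}{N_c}\mathrm{m}_{G/L}(\mathcal{R}_t)$. For the volume, the paper uses the $U^-\times\mathfrak{a}_\theta$ decomposition $g=ne^yL$ rather than $K\times\mathfrak{a}_\theta$, which is more natural since $s\in e^{\mathcal{O}}P$ is a condition on the $U^-$-component; then $a_{\tau t}^{-1}ne^y=(a_{\tau t}^{-1}na_{\tau t})e^{\tau tY+y}$ with $a_{\tau t}^{-1}na_{\tau t}\to 1$ exponentially, so $\Lambda(a_{\tau t}^{-1}ne^y)\in d_0+\mathcal{D}_t$ reduces to $y\in d_0-\tau tY+\mathcal{D}_t$ up to an exponentially small error, and the volume is $e^{\langle\varrho_X,d_0\rangle}e^{-\tau t\langle\varrho_X,Y\rangle}\mathrm{m}_{\mathfrak{u}^-}(\mathcal{O})\,\nu(\mathcal{D}_t)$. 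The regularity hypothesis $\nu(\mathcal{D}_t+B_{\mathfrak{a}_\theta}(0,e^{-\varepsilon t}))\sim\nu(\mathcal{D}_t)$ is what absorbs that perturbation of $\mathcal{D}_t$.
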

\begin{remark}
	The condition \( \nu(\mathcal{D}_t+B_{\mathfrak{a}_\theta}(0,e^{-\varepsilon t})) \sim_{t\to+\infty} \nu(\mathcal{D}_t) \) should be interpreted as saying that the boundary of \( \mathcal{D}_t \) is regular enough.
	In all the examples that we shall consider, such as the ones suggested by Peyre and already studied in Section~\ref{sec:hc}, this condition is satisfied.
\end{remark}
\begin{proof}[Proof of Proposition~\ref{pr:eqc}]
It is enough to check that for each \( c \in C \), there exists a constant \( \alpha_c \) such that for any bounded open subset $\mathcal{O}\subset \mathfrak{u}^-$ with smooth boundary, one has the asymptotic equivalent
\[
	\mu_{x, \tau, t}^{(c)}(\mathcal{O}) \sim_{t \to +\infty} \alpha_c \cdot e^{ -\tau t \langle \varrho_X, Y\rangle }\cdot \nu(\mathcal{D}_t) \cdot \mathrm{m}_{\mathfrak{u}^-}(\mathcal{O}).
\]
Note that \(\mu_{x, \tau, t}^{(c)}(\mathcal{O})\) is equal to the number of rational points \(v \in \Gamma c P\) such that
\begin{equation}\label{eq:vcond}
	\left\{
		\begin{array}{l}
			\mathbf{h}(v) \in \mathcal{D}_t\\
			a_{\tau t} \cdot p_x(v) \in \mathcal{O}.
		\end{array}
	\right.
\end{equation}
Writing \(v= \gamma c P = c \gamma^c P\) in the decomposition \ref{eq:union}, observe that the elements \(v\) are in one-to-one correspondence with cosets \(\gamma^c (P\cap \Gamma^c)\) in \(\Gamma^c/(P\cap \Gamma^c)\).
To make explicit computations, it will be convenient to identify the vector space \(\mathfrak{a}_\theta \) with \(\mathbb{R}^{ \lvert \Pi \setminus \theta\rvert}\) through the map \( Z \mapsto ( \langle \varpi_\alpha, Z \rangle )_{\alpha \in \Pi \setminus \theta } \).
With this identification, and recalling also that \(p_x^{-1}\colon Z \mapsto s_xe^ZP\), conditions \eqref{eq:vcond} can be rewritten in terms of \(\gamma^c\) as
\begin{equation}
	\left\{
		\begin{array}{l}
			(\log \lVert q_{c,\alpha }c \gamma^c e_{\varpi_\alpha }\rVert)_\alpha \in \mathcal{ D}_t \\
			\gamma^c \in c^{-1}s_x a^{-1}_{\tau t}e^\mathcal{O} P.
		\end{array}
	\right.
\end{equation}
We now let
\[
	g_t = a_{\tau t} s_x^{-1} c
\]
and use the change of variables
\[
	\gamma_1 = g_t \gamma^c \in g_t \Gamma^c. 
\]
So we want to count the number of cosets \(\gamma_1\) in \(g_t \Gamma^c/ (P\cap \Gamma^c)\) satisfying
(note that \(a^{-1}_{\tau t} \gamma_1 = s_x^{-1}c \gamma^c\) and that \(s_x\) preserves the norm in each representation \( V_\alpha \))
\begin{equation*}
	\left\{
		\begin{array}{l}
			(\log \lVert q_{c,\alpha }a_{\tau t}^{-1} \gamma_1 e_{\varpi_\alpha }\rVert)_\alpha \in \mathcal{ D}_t \\
			\gamma_1 \in e^\mathcal{O} P.
		\end{array}
	\right.
\end{equation*}
These conditions depend on \( \gamma_1 \) only modulo the subgroup \[ L = \bigcap_{\alpha \in \Pi \setminus \theta }\mathrm{Stab}_G e_{\varpi_\alpha}.\]
Let
\begin{equation}\label{eq:d0}
		d_0 = (-\log \lvert q_{c,\alpha }\rvert)_{\alpha \in \Pi \setminus \theta}.
\end{equation}
Denoting as before \( N_c = [\Gamma^c\cap P : \Gamma^c \cap L] \), one has

\begin{align*}
	\mu_{x,\tau,t}^{(c)}(\mathcal{O}) & = \sum_{\gamma_1 \in g_t\Gamma^c/\Gamma^c\cap P} \mathbbm{1}_{\{\gamma_1 \in e^\mathcal{O} P\ \mathrm{and}\ \Lambda(a_{\tau t}^{-1}\gamma_1) \in d_0 + \mathcal{D}_t\}} \\
				& = \frac{1}{N_c}\sum_{\gamma_1 \in g_t\Gamma^c/\Gamma^c\cap L} \mathbbm{1}_{\mathcal{R}_t}(\gamma_1L).
\end{align*}
By our assumption that the family \( (\mathcal{D}_t)_{t >0} \) satisfies effective counting in low lattices, there exists $\eta>0$ such that as \(t\) tends to \(+\infty\), for every $g_2$ such that $\norm{g_2}\leq e^{\eta t }$,
	\begin{equation}\label{eq:effectivecounting}
		\sum_{\gamma_1 \in g_2\Gamma^c/\Gamma^c\cap L} \mathbbm{1}_{\mathcal{R}_t}(\gamma_1L)
	= \alpha_{0,c} \cdot \mathrm{m}_{G/L}(\mathcal{R}_t) \cdot \left(1 + O(e^{-\eta t})\right).
	\end{equation}
Moreover, recalling that \(g_t = a_{\tau t} s_x^{-1} c\) and that \((a_t s_x^{-1} \Gamma)_{t>0} \) has zero rate of escape, we can always find, for \(t>0\) large enough, an element \(g_2\) (implicitly depending on \( t \)) such that \(\lVert g_2 \rVert \leq e^{\eta t}\) and
\[
	g_t \Gamma^c = g_2 \Gamma^c.
\]
Thus, to conclude, all we need is to estimate the volume of the set \(\mathcal{R}_t\).
For that, we decompose an element \(gL\) in \(\mathcal{R}_t\) as
\[
g = n e^y L,\ n\in U^-,\ y \in \mathfrak{a}_\theta.
\]
From the Haar measure decomposition~\cite[Theorem~8.32]{knapp_lgbi} applied first to $G=U^-P$ and then to $P=e^{\mathfrak{a}_\theta}L$, (note that the modulus for the Haar measure on $P$ satisfies \(\Delta_P(e^y)=e^{ \langle \varrho_X, y \rangle}\)), we obtain, with the appropriate normalization,
\begin{align*}
\mathrm{m}_{G/L}(\mathcal{R}_t) & = \int_{U^-} \int_{\mathfrak{a}_\theta } \mathbbm{1}_{\mathcal{R}_t}(ne^y) e^{ \langle \varrho_X, y \rangle }\, dn\, dy \\
				& = \int_{U^-} \mathbbm{1}_{\{n \in e^{\mathcal{O}}\}} \int_{\mathfrak{a}_\theta}  \mathbbm{1}_{\{\Lambda(a_{\tau t}^{-1}ne^y) \in d_0 + \mathcal{D}_t\}} e^{ \langle \varrho_X, y \rangle }\,d y\,dn.
\end{align*}
Rewriting
\[
	a_{\tau t}^{-1} \gamma_1 = a_{\tau t }^{-1} n e^{y} = (a_{\tau t}^{-1} n a_{\tau t})a_{\tau t}^{-1} e^y = (a_{\tau t}^{-1} n a_{\tau t})e^{\tau t Y+y}
\]
and using that \( a_{\tau t}^{-1} n a_{\tau t} \) is exponentially close to the identity, the condition \( \Lambda(a_{\tau t}^{-1}n e^y) \in d_0 + \mathcal{D}_t\) is, up to an exponentially small error, equivalent to
\[
y \in d_0 -  \tau t Y + \mathcal{D}_t.
\]
Therefore, we may conclude
%
\[
	\mathrm{m}_{G/L}(\mathcal{R}_t) \sim_{t\to+\infty} e^{ \langle \varrho_X, d_0 \rangle } e^{- \tau t \langle \varrho_X, Y \rangle } \mathrm{m}_{\mathfrak{u}^-}(\mathcal{O}) \nu(\mathcal{D}_t).
\]
This yields the desired result, with constant \(\alpha_c = \alpha_{0,c}e^{ \langle \varrho_X,d_0 \rangle }\).
\end{proof}

\subsection{Effective counting in low lattices}
\label{ss:effcount}
In this paragraph, we apply Proposition~\ref{pr:eqc} to derive Theorem~\ref{th:maini} from the introduction and another convergence result for zooming measures in general flag varieties, without restriction on the rank.
This amounts to checking that certain families of subsets \((\mathcal{D}_t) \) indeed satisfy the effective counting statement introduced in the previous paragraph.

\bigskip
We shall need an effective version of Theorem \ref{th:equi}, stated below.
It follows from  Shi~\cite[Theorem~1.6]{shi}, and was also observed independently by Dabbs, Kelly and Li~\cite[Theorem~2]{Dabbs-Kelly-Li} for the group \( \operatorname{SL}_d \), with a proof that can be adapted to cover the case of a general semisimple \( \mathbb{Q} \)-group without compact factors, as explained in~\cite[\S1.3]{Dabbs-Kelly-Li}.
In the statement below, we use a Sobolev norm on smooth functions on \( G/\Gamma  \), defined in the following way.
Fix a basis \( (u_i)_{1\leq i\leq d} \) for the Lie algebra \( \mathfrak{g} \) of \( G \) and for \( \boldsymbol{\alpha} = (\alpha_1,\dots,\alpha_d) \) in \( \mathbb{N}^d \), consider the associated left-invariant differential operators \( D_{\boldsymbol{\alpha}} =  \partial_{u_1}^{\alpha_1}\dots\partial_{u_d}^{\alpha_d} \).
For \( k \geq 1 \), the Sobolev norm \( \mathcal{S}_k(\varphi) \) of a smooth compactly supported function \( \varphi \) on \( G/\Gamma \) is defined by
\[
\mathcal{S}_k(\varphi) = \max_{ \boldsymbol{\alpha} :\ \alpha_1+\dots+\alpha_d \leq k} \lVert D_{\boldsymbol{\alpha}}\varphi \rVert_\infty.
\]
\begin{theorem}[Effective equidistribution of translated periodic orbits]
	\label{th:effequi}
	\hspace{-.5cm} Let \( G \) be a semisimple \( \mathbb{Q} \)-group without compact factors and \( \Gamma = G(\mathbb{Z}) \) be an arithmetic subgroup.
	Let \( P \) be a parabolic \( \mathbb{Q} \)-subgroup defined by a subset of simple roots \( \theta \subset \Pi \) and let
	\(
		L = \bigcap_{\alpha \in \Pi \setminus \theta }\mathrm{Stab}_G e_{\varpi_\alpha}.
	\)
	Finally, for \( y \in \mathfrak{a}_\theta \), set
	\(
		\left\lfloor y \right\rfloor = \min_{\alpha \in \Pi \setminus \theta} \varpi_\alpha(y). 
	\)

	Then, there exists a Sobolev norm \( \mathcal{S}_k \) on \( \mathcal{C}_c^\infty(G/\Gamma) \) and 
	constants \(C_0, \delta>0\) such that for all \( \varphi \in \mathcal{C}_c^\infty(G/\Gamma) \) and all \( y \in \mathfrak{a}_\theta \),
	\[
	\left\lvert \int_{L/(\Gamma\cap L)} \varphi(e^y l \Gamma)\, d\mathrm{m}_{L/(\Gamma\cap L)}(l) - \int_{G/ \Gamma} \varphi \, d\mathrm{m}_{G/\Gamma}\right\rvert
	\leq C_0 \cdot \mathcal{S}_k(\varphi)\cdot e^{-\delta \lfloor y \rfloor}.
	\]
\end{theorem}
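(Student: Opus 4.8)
The plan is to recognise the left-hand side as the pairing of a test function with a translated \emph{periodic} orbit measure, and then to invoke effective equidistribution of such orbits; this is precisely Shi~\cite[Theorem~1.6]{shi} (and \cite[Theorem~2]{Dabbs-Kelly-Li} when $G=\operatorname{SL}_d$), so what follows is a description of the mechanism rather than a full proof. Decomposing $G$ as an almost direct product of its $\mathbb{Q}$-simple factors and $\Gamma$ as commensurable to the corresponding product of arithmetic lattices --- under which both $L$ and $e^y$ split --- one reduces, at the cost of worsening $C_0$ and $\delta$, to the case where $G$ is almost $\mathbb{Q}$-simple. Next, note that $e^y$ acts on each highest weight vector $e_{\varpi_\alpha}$ by the scalar $e^{\varpi_\alpha(y)}$, hence normalises $L=\bigcap_{\alpha\in\Pi\setminus\theta}\mathrm{Stab}_G\,e_{\varpi_\alpha}$; consequently $(e^y)_*\mathrm{m}_{L/(\Gamma\cap L)}$ is simply the $L$-invariant probability measure carried by the single periodic orbit $L\cdot e^y\Gamma$, and the theorem asserts that these orbits equidistribute in $G/\Gamma$, effectively at rate $e^{-\delta\lfloor y\rfloor}$, as their base point escapes into the cusp in the direction prescribed by $\lfloor y\rfloor\to+\infty$.

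The argument rests on two ingredients. The first is \emph{effective mixing}: since $G$ has no compact factors and $\Gamma$ is arithmetic, $G/\Gamma$ has a spectral gap (Kazhdan's property (T) when every factor has $\mathbb{R}$-rank at least $2$, property $(\tau)$ for arithmetic lattices otherwise), which upgrades to quantitative decay of matrix coefficients: for the regular representation $\pi$ of $G$ on $L^2(G/\Gamma)$ there exist $p\geq 2$, an integer $k\geq 1$ and $C>0$ such that, for all $g\in G$ and all $\varphi_1,\varphi_2\in\mathcal{C}_c^\infty(G/\Gamma)$ with $\int_{G/\Gamma}\varphi_i\,d\mathrm{m}_{G/\Gamma}=0$,
\[
\bigl\lvert\langle\varphi_1,\pi(g)\varphi_2\rangle\bigr\rvert\leq C\,\mathcal{S}_k(\varphi_1)\,\mathcal{S}_k(\varphi_2)\,\Xi(g)^{1/p},
\]
where $\Xi$ is the Harish-Chandra spherical function. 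Writing $y^+$ for the dominant Weyl representative of $y$ and $\rho_G=\sum_{\alpha\in\Pi}\varpi_\alpha$ for the half-sum of the positive roots of $G$, bi-$K$-invariance gives $\Xi(e^y)=\Xi(e^{y^+})\ll e^{-\langle\rho_G,\,y^+\rangle}$; and since $\langle\rho_G,y^+\rangle=\sum_{\alpha\in\Pi}\varpi_\alpha(y^+)\geq\lfloor y^+\rfloor\geq\lfloor y\rfloor$ --- the last inequality because every element of the Weyl orbit of $\varpi_\alpha$ is $\preceq\varpi_\alpha$ in the dominance order and $y^+$ is dominant --- one obtains $\Xi(e^y)^{1/p}\ll e^{-\delta_0\lfloor y\rfloor}$ for some $\delta_0>0$. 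The second ingredient is a \emph{thickening} step: choosing a complement $\mathfrak{q}$ to $\mathfrak{l}$ in $\mathfrak{g}$, a ball $B_\rho\subset\mathfrak{q}$ of radius $\rho$, and a smooth probability density $\chi_\rho$ on $\exp(B_\rho)$ with $\mathcal{S}_k(\chi_\rho)\ll\rho^{-N}$, one forms the absolutely continuous probability measure obtained by convolving $\mathrm{m}_{L/(\Gamma\cap L)}$ with $\chi_\rho$ and pushing forward by $e^y$. By the mixing bound (applied with $g=e^{-y}$, after subtracting means), its pairing with $\varphi$ lies within $O\bigl(\mathcal{S}_k(\varphi)\,\rho^{-N}e^{-\delta_0\lfloor y\rfloor}\bigr)$ of $\int_{G/\Gamma}\varphi\,d\mathrm{m}_{G/\Gamma}$; on the other hand, using the identity $e^y\exp(w)l\Gamma=\exp\!\bigl(\mathrm{Ad}(e^y)w\bigr)\cdot e^y l\Gamma$ for $w\in\mathfrak{q}$, the same pairing differs from $\int_{L/(\Gamma\cap L)}\varphi(e^y l\Gamma)\,d\mathrm{m}_{L/(\Gamma\cap L)}(l)$ by at most $O\bigl(\mathcal{S}_1(\varphi)\,\lVert\mathrm{Ad}(e^y)|_{\mathfrak{q}}\rVert\,\rho\bigr)$. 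Optimising over $\rho$ then yields the theorem with some $\delta>0$, \emph{provided} $\lVert\mathrm{Ad}(e^y)|_{\mathfrak{q}}\rVert$ stays bounded, i.e.\ provided $\mathrm{Ad}(e^y)$ does not expand the complement $\mathfrak{q}$. This is automatic when the unipotent radical of $P$ is abelian --- the setting of Theorem~\ref{th:maini} --- since for $y\in\mathfrak{a}_\theta$ the operator $\mathrm{Ad}(e^y)$ is trivial on the Levi and contracts $\mathfrak{u}^-$, so that $\mathfrak{q}=\mathfrak{a}_\theta\oplus\mathfrak{u}^-$ is a complement to $\mathfrak{l}$ on which $\mathrm{Ad}(e^y)$ is non-expanding.

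The main difficulty is the general case, where $P$ need not be maximal: for $y\in\mathfrak{a}_\theta$ far from the sub-cone on which $e^y$ is dominant, $\mathrm{Ad}(e^y)$ may expand part of $\mathfrak{u}^-$, hence part of every complement of $\mathfrak{l}$, and the crude transversal thickening above breaks down. Resolving this requires replacing the fixed decomposition $\mathfrak{g}=\mathfrak{l}\oplus\mathfrak{q}$ by one adapted to the direction of $y$ --- the \enquote{expanding cone} analysis underlying \cite[Theorem~1.6]{shi} --- which is the technical heart of the matter; we refer to \cite{shi} (and, for $G=\operatorname{SL}_d$, to \cite{Dabbs-Kelly-Li}) for the complete argument. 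In all cases the Sobolev degree $k$ and the constants $C_0$ and $\delta$ depend only on the number of derivatives used in the mixing bound and in the $\mathcal{C}^1$-control of the thickening, hence only on $G$, and so only on $X$.
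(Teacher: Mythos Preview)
The paper does not prove this theorem at all: it simply states it as a consequence of Shi~\cite[Theorem~1.6]{shi} and Dabbs--Kelly--Li~\cite[Theorem~2]{Dabbs-Kelly-Li}, with a one-sentence remark that the latter argument adapts to general semisimple $\mathbb{Q}$-groups without compact factors. Your proposal does exactly the same --- cite Shi and Dabbs--Kelly--Li --- but you go further and sketch the underlying mechanism (spectral gap $\Rightarrow$ effective mixing, then transversal thickening \`a la Kleinbock--Margulis, with the expanding-cone issue when $P$ is not maximal), which the paper omits entirely. So your approach is consistent with, and more informative than, the paper's.

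One small caution on your sketch: the inequality $\langle\rho_G,y^+\rangle\geq\lfloor y^+\rfloor$ uses that $\varpi_\alpha(y^+)\geq 0$ for \emph{all} $\alpha\in\Pi$, not just $\alpha\in\Pi\setminus\theta$; this holds because $y^+$ is dominant, but it is worth noting that $\lfloor\,\cdot\,\rfloor$ is defined only over $\Pi\setminus\theta$ and the remaining terms are merely nonnegative, not bounded below by $\lfloor y^+\rfloor$. Also, your claim that $\mathrm{Ad}(e^y)$ is non-expanding on $\mathfrak{a}_\theta\oplus\mathfrak{u}^-$ when the unipotent radical is abelian is correct precisely because in that case $\mathfrak{a}_\theta$ is one-dimensional and $y$ lies on the ray $\mathbb{R}_{>0}Y$, hence in the closure of the positive Weyl chamber; you should make that dimensional reason explicit, since for general $y\in\mathfrak{a}_\theta$ with $\lfloor y\rfloor>0$ one does \emph{not} automatically have $y$ dominant.
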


We are now ready to prove Theorem~\ref{th:maini}, giving equidistribution of zooming measures on rank-one varieties.
For the reader's convenience, we recall the statement below, before its proof.
We note in passing that now that the zooming measures have been defined using the correct rescaling, the assumption that the unipotent radical of \( P \) is abelian is no longer necessary; it suffices to assume that \( P \) is a maximal parabolic subgroup. 

\begin{theorem}[Generic local distribution]
	\label{th:gld}
Let $G$ be a simple $\mathbb{Q}$-group, $P$ a maximal $\mathbb{Q}$-parabolic subgroup and write $X=G/P$ for the associated flag variety.
Assume \( X \) is endowed with the height \( H_\chi \) associated to an irreducible representation with highest weight \( \chi \), and denote by \( \beta_\chi=\frac{1}{\langle \chi, Y \rangle} \) the almost sure diophantine exponent.\\
Consider the zooming measure on the tangent space \( T_xX \) defined as
\[
	\mu_{x,\tau,t} = \sum_{\substack{v\in X(\mathbb{Q})\cap U_x\\ \mathrm{H}(v)\leq e^t}} \delta_{a_{\tau t} \cdot p_x(v)}.
\]
For all $\tau\in(0,\beta_\chi)$, for all $x$ in $X(\overline{\mathbb{Q}})$ not contained in any proper rational Schubert variety, for all $f\in \mathcal{C}_c(T_xX)$,
\[
	\mu_{x,\tau,t}(f)
	\sim_{t\to+\infty} \alpha_X\cdot \frac{ \langle \chi, Y \rangle }{ \langle \varrho_X, Y \rangle } \cdot e^{t \langle \varrho_X, Y \rangle (\beta_\chi - \tau) } \cdot \left( \int_{\mathfrak{u}^-}f\,d \mathrm{m_{\mathfrak{u}^-}} \right),
\]
where $\mathrm{m}$ is the Lebesgue measure on $\mathfrak{u}^-\simeq T_xX$ 
and \( \alpha_X \) is the constant from Proposition~\ref{pr:eqc}.
Moreover, the same estimate holds for Lebesgue almost every $x$ in $X(\mathbb{R})$.
\end{theorem}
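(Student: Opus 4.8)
The plan is to deduce Theorem~\ref{th:gld} from Proposition~\ref{pr:eqc} by choosing the right family of compact sets $(\mathcal{D}_t)$ in $C_{\mathrm{eff}}^\vee$ and then verifying, for that family, the \emph{effective counting in low lattices} hypothesis for every zoom factor $\tau\in(0,\beta_\chi)$. Since $P$ is maximal, $\mathfrak{a}_\theta$ is one-dimensional, $C_{\mathrm{eff}}^\vee$ is a half-line, and the height $\mathrm H_\chi$ with $\mathrm H(v)\le e^t$ corresponds exactly to the condition $\mathbf h(v)\in\mathcal{D}_t$ for $\mathcal{D}_t=[0,\beta_\chi t]\cdot(\text{unit vector})$, i.e.\ $\langle\chi,\mathbf h(v)\rangle\le t$, after the identification $\mathfrak{a}_\theta\simeq\mathbb R$ via $\varpi_\alpha$. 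One computes $\nu(\mathcal{D}_t)=\int_0^{\beta_\chi t}e^{\langle\varrho_X,Y\rangle\langle\alpha,Y\rangle^{-1}s}\,ds$, which is $\sim \frac{\langle\chi,Y\rangle}{\langle\varrho_X,Y\rangle}e^{t\langle\varrho_X,Y\rangle\beta_\chi}$ up to the explicit constant; combined with the prefactor $e^{-\tau t\langle\varrho_X,Y\rangle}$ from Proposition~\ref{pr:eqc} this produces exactly the claimed exponent $e^{t\langle\varrho_X,Y\rangle(\beta_\chi-\tau)}$ and the stated leading constant. The boundary-regularity hypothesis $\nu(\mathcal{D}_t+B_{\mathfrak{a}_\theta}(0,e^{-\varepsilon t}))\sim\nu(\mathcal{D}_t)$ is immediate here because $\mathcal{D}_t$ is an interval of length growing linearly in $t$ while its $e^{-\varepsilon t}$-thickening changes the length by an exponentially small amount.

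The substantial step is establishing effective counting in low lattices for $\tau<\beta_\chi$. Here I would fix a bounded open set $\mathcal{O}\subset\mathfrak{u}^-$ with smooth boundary and $d_0\in\mathfrak{a}_\theta$, and count cosets $\gamma_1\in g_2\Gamma^c/(\Gamma^c\cap L)$ with $\gamma_1 L\in\mathcal{R}_t$, where $\mathcal{R}_t\subset G/L$ is defined by $\gamma_1\in e^{\mathcal{O}}P$ and $\Lambda(a_{\tau t}^{-1}\gamma_1)\in d_0+\mathcal{D}_t$. The strategy, following \cite{msg,DRS,EM}, is to realize this sum as the value at $g_2\Gamma^c$ of a function $\Phi_t$ on $G/\Gamma^c$ obtained by summing the characteristic function of $\mathcal{R}_t$ over $L$-cosets, then to pair $\Phi_t$ against smooth bump functions and apply the effective equidistribution statement Theorem~\ref{th:effequi} to the translated periodic orbits $(e^y)_*\mathrm m_{L/(\Gamma^c\cap L)}$. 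One writes $G/L$ in coordinates $g=ne^y$, $n\in U^-$, $y\in\mathfrak{a}_\theta$, uses the Haar decomposition with modulus $e^{\langle\varrho_X,y\rangle}$, and conjugates $a_{\tau t}^{-1}na_{\tau t}$ (exponentially close to identity) to see that the condition $\Lambda(a_{\tau t}^{-1}ne^y)\in d_0+\mathcal{D}_t$ reduces, up to an $e^{-ct}$ error, to $y\in d_0-\tau tY+\mathcal{D}_t$. The equidistribution input gives a main term $\alpha_{0,d_0}\cdot\mathrm m_{G/L}(\mathcal{R}_t)$ with error governed by $e^{-\delta\lfloor y\rfloor}$ and by the Sobolev norm of the bump function; since $\lfloor y\rfloor$ is comparable to $t$ on the relevant range (because $\mathcal{D}_t$ sits deep in the interior of $C_{\mathrm{eff}}^\vee$ once $\tau<\beta_\chi$, so that $d_0-\tau tY+\mathcal{D}_t$ has $\lfloor\cdot\rfloor\to+\infty$ linearly), one gets a genuine power-saving error $O(e^{-\eta t})$, uniformly for $\|g_2\|\le e^{\eta t}$ after tracking how the translate $g_2$ affects the smoothing scale.

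The main obstacle I anticipate is exactly this uniformity in $g_2$ together with the requirement that the error be a \emph{fixed} power of $e^{-t}$: when $\|g_2\|$ is as large as $e^{\eta t}$, conjugating the bump function by $g_2$ inflates its Sobolev norm by a factor polynomial in $\|g_2\|$, hence by $e^{O(\eta t)}$, and one must choose $\eta$ small enough (in terms of $\delta$, $k$, and $\langle\varrho_X,Y\rangle$, $\langle\chi,Y\rangle$, and the gap $\beta_\chi-\tau$) so that this loss is beaten by the gain $e^{-\delta\lfloor y\rfloor}$. This is precisely where the hypothesis $\tau<\beta_\chi$ (rather than merely $\tau$ small) is used: it guarantees that $\mathcal{D}_t$, and hence the shifted region controlling $\lfloor y\rfloor$, penetrates the open dual effective cone at linear rate, so $\lfloor y\rfloor\gtrsim t$ and there is room to absorb the $g_2$-loss. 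One also has to handle the boundary effects from $\mathcal{O}$ and from the two faces of the interval $\mathcal{D}_t$ by the usual inner/outer approximation $\mathcal{O}^{\pm}(\varepsilon)$ with $\varepsilon=e^{-\eta' t}$, using smoothness of $\partial\mathcal{O}$ to bound $\mathrm m_{\mathfrak{u}^-}(\mathcal{O}^+(\varepsilon)\setminus\mathcal{O}^-(\varepsilon))\lesssim\varepsilon$, which contributes only an additional power-saving error. Once effective counting in low lattices is in hand, Proposition~\ref{pr:eqc} applies directly (its zero-rate-of-escape hypothesis being supplied by Proposition~\ref{pr:zerorate} in both the algebraic-generic and the Lebesgue-almost-every cases), and assembling the constants yields the stated asymptotic with leading coefficient $\alpha_X\cdot\frac{\langle\chi,Y\rangle}{\langle\varrho_X,Y\rangle}$.
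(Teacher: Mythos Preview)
Your outline follows the paper's approach closely and is essentially correct, but there is one genuine gap in your verification of effective counting in low lattices.

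You assert that ``$\lfloor y\rfloor$ is comparable to $t$ on the relevant range (because $\mathcal{D}_t$ sits deep in the interior of $C_{\mathrm{eff}}^\vee$ once $\tau<\beta_\chi$, so that $d_0-\tau tY+\mathcal{D}_t$ has $\lfloor\cdot\rfloor\to+\infty$ linearly)''. This is not true. The set $\mathcal{D}_t$ is the interval $\{z:0\le\langle\chi,z\rangle\le t\}$, which starts at the boundary $0$ of $C_{\mathrm{eff}}^\vee$; it does not sit deep in the interior. After the shift, the range for $y$ is
\[
\langle\chi,d_0\rangle-\tfrac{\tau}{\beta_\chi}t \ \le\ \langle\chi,y\rangle\ \le\ \langle\chi,d_0\rangle+\bigl(1-\tfrac{\tau}{\beta_\chi}\bigr)t,
\]
whose \emph{lower} endpoint goes to $-\infty$ linearly in $t$. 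So $\lfloor y\rfloor$ is not uniformly $\gtrsim t$, and Theorem~\ref{th:effequi} gives no useful error on that portion of $\mathcal{R}_t$. The condition $\tau<\beta_\chi$ only guarantees that the \emph{upper} endpoint tends to $+\infty$ linearly.

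The paper resolves this by splitting $\mathcal{R}_t=\mathcal{R}_t'\cup\mathcal{R}_t''$, with $\mathcal{R}_t'$ defined by $\langle\chi,y\rangle\ge\tfrac{1}{2}(1-\tau/\beta_\chi)t$. On $\mathcal{R}_t'$ one does have $\lfloor y\rfloor\gtrsim(1-\tau/\beta_\chi)t$, so effective equidistribution applies and produces the main term. The remainder $\mathcal{R}_t''$ is handled by a direct volume bound: because the density $e^{\langle\varrho_X,y\rangle}$ in $\nu$ concentrates mass near the top of the interval, one has $\mathrm{m}_{G/L}(\mathcal{R}_t'')\lesssim e^{-\eta_0't}\,\mathrm{m}_{G/L}(\mathcal{R}_t)$ for some $\eta_0'>0$ depending on $\beta_\chi-\tau$, and no equidistribution is needed there. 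Once you insert this splitting, the rest of your argument (choice of bump $\psi$, balancing $\eta$ against the Sobolev loss, the $\mathcal{R}_t^\pm$ sandwich, and the final application of Propositions~\ref{pr:zerorate} and~\ref{pr:eqc}) goes through as you describe.
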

\begin{proof}
	Our assumptions on \( x \) ensure that if \( s_x \) is any element in \( G \) such that \( x=Ps_x \), then the orbit \( (a_t s_x \Gamma)_{t>0} \) has zero rate of escape in \( G/\Gamma \).
	Since \( P \) is a maximal parabolic subgroup, there exists a unique root \( \alpha \) such that \( \Pi \setminus \theta = \left\{ \alpha \right\}  \).
The space \( \mathfrak{a}_\theta \) is one-dimensional and \( Y \in \mathfrak{a}_\theta \) is uniquely defined by the condition \(\langle\alpha,Y\rangle=1\).
	The logarithmic height \( \mathrm{h}_\chi \) on \( X=G/P \) is given by an irreducible representation of \( G \) with highest weight \( \chi = n \varpi_\alpha \), for some positive integer \( n \).
	The height \( \mathrm{h}_\chi \) is recovered from the multiheight by the formula \( \mathrm{h}_\chi(v) = \langle \chi, \mathbf{h}(v) \rangle  \).
	We consider the family of sets
	\[
	\mathcal{D}_t = \left\{ z \in \mathfrak{a}_\theta\ |\ 0 \leq \langle \chi, z \rangle \leq t \right\}.
\]
The condition \( \mathbf{h}(v) \in \mathcal{D}_t \) is equivalent to \( H_\chi(v) \leq e^t \).
Moreover, it is readily checked that for \( \varepsilon>0 \), the \( \nu \)-measure of the \( e^{-\varepsilon t} \)-neighborhood of \( \mathcal{D}_t \) is equivalent to that of \( \mathcal{D}_t \) as \( t \) tends to \( +\infty \).
Therefore, in order to apply Proposition~\ref{pr:eqc}, it suffices to show that the family \( \mathcal{D}_t \) satisfies the effective counting for low lattices for any zoom factor \( \tau < \beta_\chi  \).

\bigskip
For \( t>0 \), consider the function \( F_t \) on \( G/\Gamma^c \) given by the expression
\[
	F_t(g \Gamma^c) =  
	\sum_{\gamma \in \Gamma^c/\Gamma^c\cap L} \mathbbm{1}_{\{g \gamma L \in \mathcal{R}_t \}}.
\]
Our goal is to evaluate \(F_t\) at the element \(g_2 \Gamma^c\), which is relatively close to the identity coset \(\Gamma^c \).
As in the proof of Theorem~\ref{th:count}, we may rewrite the inner product of \(F_t\) with a smooth compactly supported function \(\psi\) on \(G/\Gamma^c\) as
\begin{align*}
	\langle \psi, F_t \rangle
	& = \int_{G/\Gamma^c} \psi(g \Gamma^c)\sum_{\gamma \in \Gamma^c/\Gamma^c\cap L} \mathbbm{1}_{\mathcal{R}_t}(g \gamma L)  \,d\mathrm{m}_{G/ \Gamma^c}(g \Gamma^c)\\
	& = \alpha_{0,c} \int_{G/L} \mathbbm{1}_{\mathcal{R}_t}(gL) \left(\int_{L/(\Gamma^c \cap L)} \psi(g l \Gamma^c)\,d\mathrm{m}_{L/(\Gamma^c\cap L)}(l)\right)\,d\mathrm{m}_{G/L}(g).
\end{align*}
For \(g\) in \(\mathcal{R}_t\), we may write \(g=n e^y L\), with \(n \in e^{\mathcal{O}}\) and \(y \in \mathfrak{a}_\theta \).
The element \( g \) belongs to \( \mathcal{R}_t \) if and only if \( \Lambda(a_{\tau t}^{-1}g) \in d_0 + \mathcal{D}_t \). As in the proof of Proposition \ref{pr:eqc}, this is, up to an exponentially small error, equivalent to
\[
	y \in d_0 - \tau t Y + \mathcal{D}_t
\]
i.e. 
\[
	\langle \chi, d_0 \rangle  - \frac{\tau }{\beta_\chi}t \leq \langle \chi, y \rangle \leq \langle \chi,d_0 \rangle + \left(1- \frac{\tau }{\beta_\chi}\right)t,
\] where $d_0$ is defined by \eqref{eq:d0}.
We split \( \mathcal{R}_t \) into two subsets
\[
	\mathcal{R}_t' = \left\{ g=ne^y \in \mathcal{R}_t\ |\ \langle \chi, y \rangle \geq \left(1-\frac{\tau }{\beta_\chi}\right)\frac{t}{2} \right\} 
	\quad\mbox{and}\quad
	\mathcal{R}_t'' = \mathcal{R}_t \setminus \mathcal{R}_t'.
\]
For \( g \) in \( \mathcal{R}_t' \), the lower bound
\[
	\langle \varpi_\alpha, y \rangle  \gtrsim \left(1-\frac{\tau }{\beta_\chi}\right) t
\]
allows us to apply Theorem~\ref{th:effequi}, to conclude that for some \(\eta_0=\eta_0(c,\tau)>0\), uniformly for all \(g\) in \(\mathcal{ R}_t'\),
\begin{equation}
	\label{eq:spsi}
	\left\lvert \int_{L/(\Gamma^c \cap L)} \psi(g l \Gamma^c)\,d\mathrm{m}_{L/(\Gamma^c\cap L)}(l) - \int_{G/\Gamma } \psi d \mathrm{m}_{G/L}\right\rvert
	\lesssim e^{-\eta_0t} \mathcal{S}(\psi).
\end{equation}
Now if \(\eta_1\) is large enough compared to \(\eta_2\), our assumption that \( \lVert g_2\rVert \leq e^{\eta_2 t}\) allows us to take for \(\psi\) a smooth approximate unit in a neighborhood of \(g_2 \Gamma^c \) satisfying:
\begin{enumerate}
\item $\int_{G/\Gamma^c}\psi = 1$;
\item $\mathrm{Supp}\psi \subset \mathbf{B}_{G/\Gamma^c}(g_2,e^{-\eta_1 t})$;
\item $\mathcal{S}(\psi) \leq e^{C \eta_1 t}$, where \(C\) is some constant depending on the degree of the Sobolev norm.
\end{enumerate}
Combining \eqref{eq:spsi} with the upper bound on $\mathcal{S}(\psi)$ and assuming we chose \(\eta_1\) so that \(\eta_0>(C+1)\eta_1\), we find
\begin{align*}
	\hspace{-2cm}	& \int_{G/L} \mathbbm{1}_{\mathcal{R}_t'}(gL) \left(\int_{L/(\Gamma^c \cap L)} \psi(g l \Gamma^c)\,d\mathrm{m}_{L/(\Gamma^c\cap L)}(l)\right)\,d\mathrm{m}_{G/L}(g)\\
	& \hspace{6cm} = \alpha_{0,c} \cdot \mathrm{m}_{G/L}(\mathcal{R}_t') \cdot \left(1+O(e^{-t\eta_1})\right).
\end{align*}
On the other hand, it is easy to check that for some \( \eta_0'>0 \) depending on \( \tau-\beta_\chi>0\), one can bound \( \mathrm{m}_{G/L}(\mathcal{R}_t'')\lesssim e^{-\eta_0' t} \mathrm{m}_{G/L}(\mathcal{R}_t)) \), so that in the end, one finds
\[
	\langle \psi, F_t \rangle
	= \alpha_{0,c} \cdot \mathrm{m}_{G/L}(\mathcal{R}_t) \cdot \left(1+O(e^{-t\eta_1})\right).
\]
We now explain how the assumption on the support of \(\psi\) can be used to show that \( \langle F_t, \psi\rangle \) is close to \( F_t(g_2 \Gamma^c)\).
The trick is to replace \(\mathcal{R}_t\) by a similarly defined but slightly smaller open set \( \mathcal{R}_t^- \)
such that, provided \(\eta_2>0\) is small enough compared to \(\eta_1\),
\begin{enumerate}
	\item \( \mathrm{m}_{G/L}(\mathcal{R}_t^-) \geq ( 1 - e^{-t \eta_2} ) \mathrm{m}_{G/L}(\mathcal{R}_t) \);
	\item for every \(h \in \mathbf{B}_G\left(1,e^{-\eta_1 t}\right)\), one has \(h \mathcal{R}_t^- \subset \mathcal{R}_t\).
\end{enumerate}
Then, we consider
\[
	F_t^-(g \Gamma^c) = \sum_{\gamma \in \Gamma^c/\Gamma^c\cap L} \mathbbm{1}_{\{g \gamma L \in \mathcal{R}^-_t \}}.
\]
For each \(g \Gamma^c\) in \(\mathrm{Supp}\psi \), we may write \(g \Gamma^c = h g_2 \Gamma^c \) for some \(h\) in \(\mathbf{B}_G(1,e^{-t\eta_1})\), so 
\[
	F_t^-(g \Gamma^c) \leq F_t(g_2 \Gamma^c)
\]
and therefore
\[
	\langle \psi, F_t^- \rangle \leq F_t(g_2 \Gamma^c). 
\]
Applying the above estimate to \(F_t^-\), we find
\begin{align*}
	F_t(g_2 \Gamma^c)
	& \geq \alpha_{0,c} \cdot \mathrm{m}_{G/L}(\mathcal{R}_t^-) \cdot \left( 1 - O(e^{-t\eta_1})\right)\\
	& \geq \alpha_{0,c} \cdot \mathrm{m}_{G/L}(\mathcal{R}_t) \cdot \left( 1 - O(e^{-t\eta_2})\right).
\end{align*}
A similar argument using a slightly larger set \(\mathcal{R}_t^+\) to control \(\mathcal{R}_t\) from above yields the analogous upper bound
\[
	F_t(g_2 \Gamma^c) \leq \alpha_{0,c} \cdot \mathrm{m}_{G/L}(\mathcal{R}_t) \cdot \left(1 + O(e^{-t\eta_2})\right).
\]
This shows that the family \( \mathcal{D}_t \) satisfies the effective counting for low lattices, and applying Proposition~\ref{pr:eqc}, one gets
\[
	\mu_{x,\tau,t}(f)
	\sim_{t\to+\infty} \alpha_X\cdot e^{- \tau t \langle \varrho_X, Y \rangle } \cdot \nu(\mathcal{D}_t) \cdot \left( \int_{\mathfrak{u}^-}f\,d \mathrm{m_{\mathfrak{u}^-}} \right).
\]
Noting that
\[
	\nu(\mathcal{D}_t) = \int \mathbbm{1}_{ 0 \leq \langle \chi,y \rangle \leq t} e^{ \langle \varrho_X, y \rangle }\,dy
	= \int_0^t e^{s\frac{ \langle \varrho_X, Y \rangle }{ \langle \chi, Y \rangle }}\,ds
	\sim_{t\to+\infty} \frac{ \langle \chi, Y \rangle }{ \langle \varrho_X, Y \rangle }e^{t\frac{ \langle \varrho_X, Y \rangle }{ \langle \chi, Y \rangle }}
\]
and recalling that \( \beta_\chi \cdot \langle \chi,Y \rangle =1 \), this yields
\[
	\mu_{x,\tau,t}(f)
	\sim_{t\to+\infty} \alpha_X\cdot \frac{ \langle \chi, Y \rangle }{ \langle \varrho_X, Y \rangle } \cdot e^{t \langle \varrho_X, Y \rangle (\beta_\chi - \tau) } \cdot \left( \int_{\mathfrak{u}^-}f\,d \mathrm{m_{\mathfrak{u}^-}} \right). \qedhere
\]
\end{proof}

One can also use Proposition~\ref{pr:eqc} to study the local distribution of rational points on a general flag variety, without any restriction on the rank.
This is the content of the result below, which allows a small zoom factor \( \tau >0\) for the local distribution of rational points with height in a moving compact set of the form \( \mathcal{D}_t = tu + \mathcal{D}_0 \), as in Section~\ref{sec:hc}.

\begin{theorem}[Local distribution for bounded domains in \( \mathring{C}_{\mathrm{eff}}^\vee \)]
	\label{th:gldg}
	Let \( G \) be a semisimple $\mathbb{Q}$-group, $P$ a maximal $\mathbb{Q}$-parabolic subgroup and write $X=G/P$ for the associated flag variety.
	Fix an element \( u \in \mathring{C}_{\mathrm{eff}}^\vee \), a compact domain \( \mathcal{D}_0 \) with smooth boundary in \( \mathfrak{a}_\theta \), and for \( t>0 \), let \( \mathcal{D}_t = tu + \mathcal{D}_0 \).\\
There exists \( \tau_0>0 \) such that for all $\tau\in(0,\tau_0)$, for all $x$ in $X(\overline{\mathbb{Q}})$ not contained in any proper rational Schubert variety, for all $f\in \mathcal{C}_c(T_xX)$,
\[
	\mu_{x,\tau,t}(f)
	\sim_{t\to+\infty} \alpha_X\cdot e^{-\tau t \langle \varrho_X, Y \rangle} \cdot \nu(\mathcal{D}_t) \cdot \left( \int_{\mathfrak{u}^-}f\,d \mathrm{m_{\mathfrak{u}^-}} \right).
\]
The same estimate holds for Lebesgue almost every $x$ in $X(\mathbb{R})$.
\end{theorem}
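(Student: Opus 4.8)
The plan is to deduce Theorem~\ref{th:gldg} from Proposition~\ref{pr:eqc}, exactly as Theorem~\ref{th:gld} was deduced from it; the only work is to verify the two hypotheses of that proposition for the moving family $\mathcal{D}_t=tu+\mathcal{D}_0$ and all small enough $\tau>0$. The zero rate of escape of $(a_ts_x\Gamma)_{t>0}$ comes for free from Proposition~\ref{pr:zerorate}, under either of the two genericity conditions on $x$ (this also covers the Lebesgue-generic statement). The regularity condition $\nu(\mathcal{D}_t+B_{\mathfrak{a}_\theta}(0,e^{-\varepsilon t}))\sim_{t\to+\infty}\nu(\mathcal{D}_t)$ will follow by writing $y=tu+z$, factoring out $e^{t\langle\varrho_X,u\rangle}$, and comparing $\int_{\mathcal{D}_0+B_{\mathfrak{a}_\theta}(0,e^{-\varepsilon t})}e^{\langle\varrho_X,z\rangle}\,dz$ with $\int_{\mathcal{D}_0}e^{\langle\varrho_X,z\rangle}\,dz$: the difference is $O(e^{-\varepsilon t})$ since $\mathcal{D}_0$ has smooth boundary and the density is bounded on the relevant compact region. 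So the entire content is to establish effective counting in low lattices (Definition~\ref{def:countvol}) for $(\mathcal{D}_t)$.

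For that, I would first set $\tau_0=\sup\{\tau\geq 0:u-\tau Y\in\mathring{C}_{\mathrm{eff}}^\vee\}$, which is positive because $\mathring{C}_{\mathrm{eff}}^\vee$ is open and contains $u$, and fix $\tau\in(0,\tau_0)$. One then runs the computation in the proof of Theorem~\ref{th:gld} step by step: fix $\mathcal{O}$ and $d_0$, introduce $F_t(g\Gamma^c)=\sum_{\gamma\in\Gamma^c/\Gamma^c\cap L}\mathbbm{1}_{\mathcal{R}_t}(g\gamma L)$, and unfold $\langle\psi,F_t\rangle$ over $G/L$. Writing an element of $\mathcal{R}_t$ as $ne^yL$ with $n\in e^{\mathcal{O}}$ and $y\in\mathfrak{a}_\theta$, the defining condition becomes, up to an exponentially small error, $y\in d_0+t(u-\tau Y)+\mathcal{D}_0$. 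The decisive simplification over the computation in the proof of Theorem~\ref{th:gld} is that \emph{no splitting} of $\mathcal{R}_t$ into a ``good'' and a ``bad'' piece is needed. There, $\mathcal{D}_t$ had one endpoint anchored at the origin, so after the shift by $-\tau tY$ it straddled the wall of the dual effective cone; here $\mathcal{D}_t$ is bounded, and the entire shifted set $d_0+t(u-\tau Y)+\mathcal{D}_0$ stays inside $\mathring{C}_{\mathrm{eff}}^\vee$ because $u-\tau Y\in\mathring{C}_{\mathrm{eff}}^\vee$. Since $P$ is maximal, $\Pi\setminus\theta=\{\alpha\}$, so $\mathfrak{a}_\theta$ is a line and $\lfloor y\rfloor=\varpi_\alpha(y)$, which on $\mathcal{R}_t$ equals $t\,\varpi_\alpha(u-\tau Y)+O(1)\geq ct$ for some $c>0$, uniformly.

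With $\lfloor y\rfloor\geq ct$ uniformly on $\mathcal{R}_t$, the remainder is essentially identical to the corresponding part of the proof of Theorem~\ref{th:gld}: one applies Theorem~\ref{th:effequi} to the inner integral against a smooth approximate unit $\psi$ at $g_2\Gamma^c$ of width $e^{-\eta_1t}$ with $\mathcal{S}_k(\psi)\leq e^{C\eta_1t}$, so that the error is $\lesssim\mathcal{S}_k(\psi)\,e^{-\delta ct}\lesssim e^{(C\eta_1-\delta c)t}$, which is $O(e^{-\eta t})$ for a suitable $\eta>0$ once $\eta_1$ is chosen small. This gives $\langle\psi,F_t\rangle=\alpha_{0,c}\,\mathrm{m}_{G/L}(\mathcal{R}_t)\bigl(1+O(e^{-\eta t})\bigr)$, and the usual sandwiching of $\mathcal{R}_t$ between slightly smaller and slightly larger sets $\mathcal{R}_t^{\pm}$ converts this into the estimate for $F_t(g_2\Gamma^c)$ required by Definition~\ref{def:countvol}, valid for all $g_2$ with $\lVert g_2\rVert\leq e^{\eta t}$. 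Proposition~\ref{pr:eqc} then yields $\mu_{x,\tau,t}(f)\sim_{t\to+\infty}\alpha_X\,e^{-\tau t\langle\varrho_X,Y\rangle}\,\nu(\mathcal{D}_t)\bigl(\int_{\mathfrak{u}^-}f\,d\mathrm{m}_{\mathfrak{u}^-}\bigr)$, which is the claim since $\nu(\mathcal{D}_t)=e^{t\langle\varrho_X,u\rangle}\nu(\mathcal{D}_0)$.

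The step I expect to be the main obstacle --- and the reason the statement is restricted to maximal $P$ --- is the sandwiching with $\mathcal{R}_t^{\pm}$: comparing $\Lambda(hs)$ with $\Lambda(s)$ for $h$ close to the identity requires $y$ to lie deep in the direction in which the highest-weight line of $V_{\varpi_\alpha}$ dominates, and for a maximal $P$ this direction is exactly $\mathring{C}_{\mathrm{eff}}^\vee$, so $u-\tau Y\in\mathring{C}_{\mathrm{eff}}^\vee$ suffices. For a non-maximal $P$ one would instead need $u-\tau Y$ to lie deep inside the Weyl chamber --- a strictly stronger requirement in general --- which is why determining the optimal range of the zoom factor in higher rank is left as an open problem.
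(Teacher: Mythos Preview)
Your argument is correct and follows the same overall strategy as the paper --- reduce to Proposition~\ref{pr:eqc} by verifying effective counting in low lattices --- but the sandwiching step is handled differently, and this leads to a genuinely different (and in fact sharper) value of~$\tau_0$.

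You fix $\tau_0 = \sup\{\tau\ge 0: u-\tau Y\in\mathring{C}_{\mathrm{eff}}^\vee\}$ and argue, using the maximality of $P$ (so that $\mathfrak{a}_\theta$ is one-dimensional and the dual effective cone coincides with the positive Weyl chamber in $\mathfrak{a}_\theta$), that the sandwiching construction of $\mathcal{R}_t^\pm$ from the proof of Theorem~\ref{th:gld} goes through verbatim for every $\tau<\tau_0$. The paper does \emph{not} claim this range. Instead it writes $g=ke^y$ with $k\in K$, first chooses the equidistribution parameters $\eta_0,\eta_1,\eta_2$ \emph{independently of $\tau$}, and then imposes an additional smallness condition on $\tau$ relative to $\eta_1$ so that the conjugate $a_{\tau t}^{-1}ha_{\tau t}$ still lies in $\mathbf{B}_G(1,e^{-\eta_1 t/2})$; this gives $\Lambda(a_{\tau t}^{-1}hg)=\Lambda(a_{\tau t}^{-1}g)+O(e^{-\eta_1 t/2})$ by a crude operator-norm bound. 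The resulting $\tau_0$ is inexplicit and in general smaller than yours; indeed Section~\ref{se:openprob} records the question of whether $u-\tau Y\in\mathring{C}_{\mathrm{eff}}^\vee$ alone suffices as open (in higher rank).

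What each route buys: the paper's conjugation argument makes no use of $P$ being maximal and would carry over unchanged to arbitrary parabolics, which is consistent with the introduction's description of Theorem~\ref{th:gldg} as applying to general flag varieties. Your argument exploits maximality to obtain the natural explicit $\tau_0$, but --- as you correctly note in your final paragraph --- does not extend beyond rank one without the stronger hypothesis $u-\tau Y\in\mathring{\mathfrak{a}}_\theta^+$, which is precisely the restriction the paper flags in the remark following Theorem~\ref{th:gldg}.
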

\begin{proof}
	The proof is very similar to the one of the previous theorem, so we only give a sketch to explain the main adjustments.
	All that needs checking is the effective counting statement for low lattices.
	First write
	\[
		\langle \psi, F_t \rangle
		= \alpha_{0,c} \int_{G/L} \mathbbm{1}_{\mathcal{R}_t}(gL) \left(\int_{L/(\Gamma^c \cap L)} \psi(g l \Gamma^c)\,d\mathrm{m}_{L/(\Gamma^c\cap L)}(l)\right)\,d\mathrm{m}_{G/L}(g).
	\]
	For \( g \in \mathcal{R}_t \), write \( g = k e^y \), with \( k \in K \) and \( y \in \mathfrak{a}_\theta \).
	From the assumption that \( u \in \mathring{C}_{\mathrm{eff}}^\vee \), one deduces that there exists \( \eta_0>0 \) such that for \( \tau>0 \) small enough, uniformly over all \( g \) in \( \mathcal{R}_t \), one has a lower bound \( \left\lfloor y \right\rfloor \geq \eta_0 t \).
	Applying Theorem~\ref{th:effequi}, this implies that for \( \eta_1>0 \) sufficiently small, if \( \psi \) satisfies \( \int_{G/\Gamma^c}\psi = 1 \) and has Sobolev norm \( \mathcal{S}(\psi) \leq e^{-\eta_1 t} \), then
\[
	\langle \psi, F_t \rangle
	= \alpha_{0,c} \cdot \mathrm{m}_{G/L}(\mathcal{R}_t) \cdot \left(1+O(e^{-t\eta_1})\right).
\]
If \( \eta_2>0 \) is small enough and \( g_2 \in G \) satisfies \( \lVert g_2 \rVert\leq e^{\eta_2 t} \), we may take \( \psi \) satisfying the above conditions and supported in the ball \( \mathbf{B}_{G/\Gamma^c}(g_2,e^{-\eta_1 t}) \).
At this point, it is important to note that the constants \( \eta_0 \), \( \eta_1 \) and \( \eta_2 \) can be chosen independently of \( \tau  \).
Then, if \( \tau  \) is sufficiently small compared to \( \eta_1 \), observe that for all \( h \) in \( \mathbf{B}_G(1,e^{-\eta_1 t}) \),
\[
	\Lambda(a_{\tau t}^{-1}hg) = \Lambda(a_{\tau t}^{-1} g) + O(e^{-\eta_1 t/2}).
\]
This allows us to construct sets \( \mathcal{R}_t^- \) and \( \mathcal{R}_t^+ \) as in the proof of Theorem~\ref{th:gld}, to show that \( \langle \psi, F_t \rangle \sim_{t\to+\infty} F_t(g_2 \Gamma^c) \) and therefore conclude the proof.
\end{proof}

\begin{remark}
	Adapting the argument from Theorem~\ref{th:gld}, one can derive the asymptotic equivalent under the much more restrictive condition that \( u- \tau Y\) lies in the interior of the positive Weyl chamber \(\mathfrak{a}_\theta^+ =\mathfrak{a}_\theta\cap \mathfrak{a}^+\), where we recall \eqref{eq:a+}.
	Note that this can only happen if \( u \) itself lies in \( \mathfrak{a}_\theta^+ \), so that the compact sets \( \mathcal{D}_t \) cannot move in any direction in the dual effective cone.
\end{remark}

\section{Concluding remarks and open problems}\label{se:openprob}

\subsection*{Flag varieties of \( \mathbb{Q} \)-rank more than one}
	It would be nice to generalize Theorem~\ref{th:maini} to the case of an arbitrary flag variety endowed with the anticanonical height.
	For that, one needs to consider the sets
	\[
		\mathcal{D}_t = \left\{ y \in \mathring{C}_{\mathrm{eff}}^\vee\ |\ \langle \varrho_X,u \rangle \leq t \right\}.
	\]
				\begin{figure}[h]
		\centering
		\begin{minipage}{0.47\textwidth}
			\centering
			\includegraphics[scale=0.4]{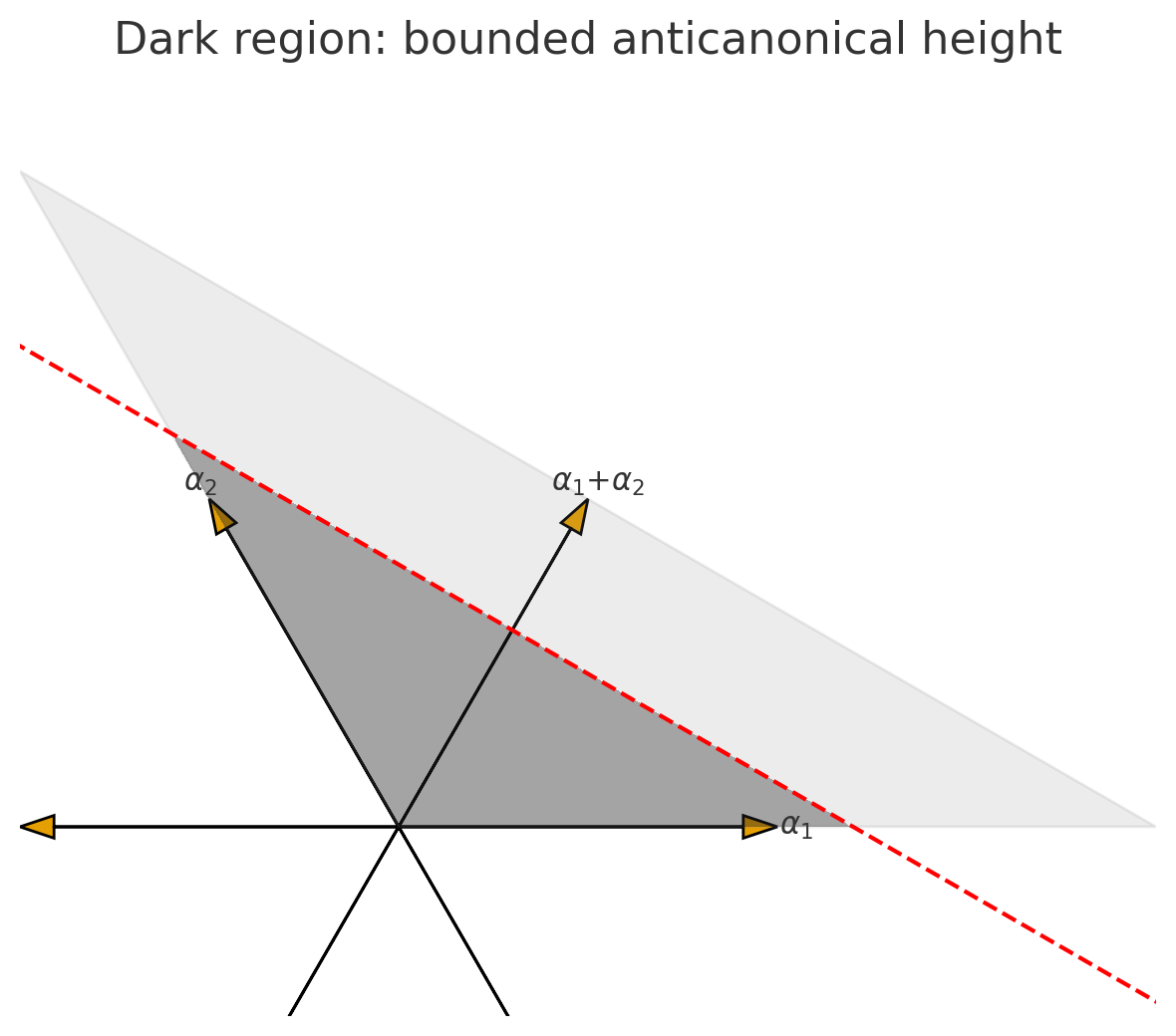}
		\end{minipage}
		\begin{minipage}{0.47\textwidth}
			\centering
			\includegraphics[scale=0.4]{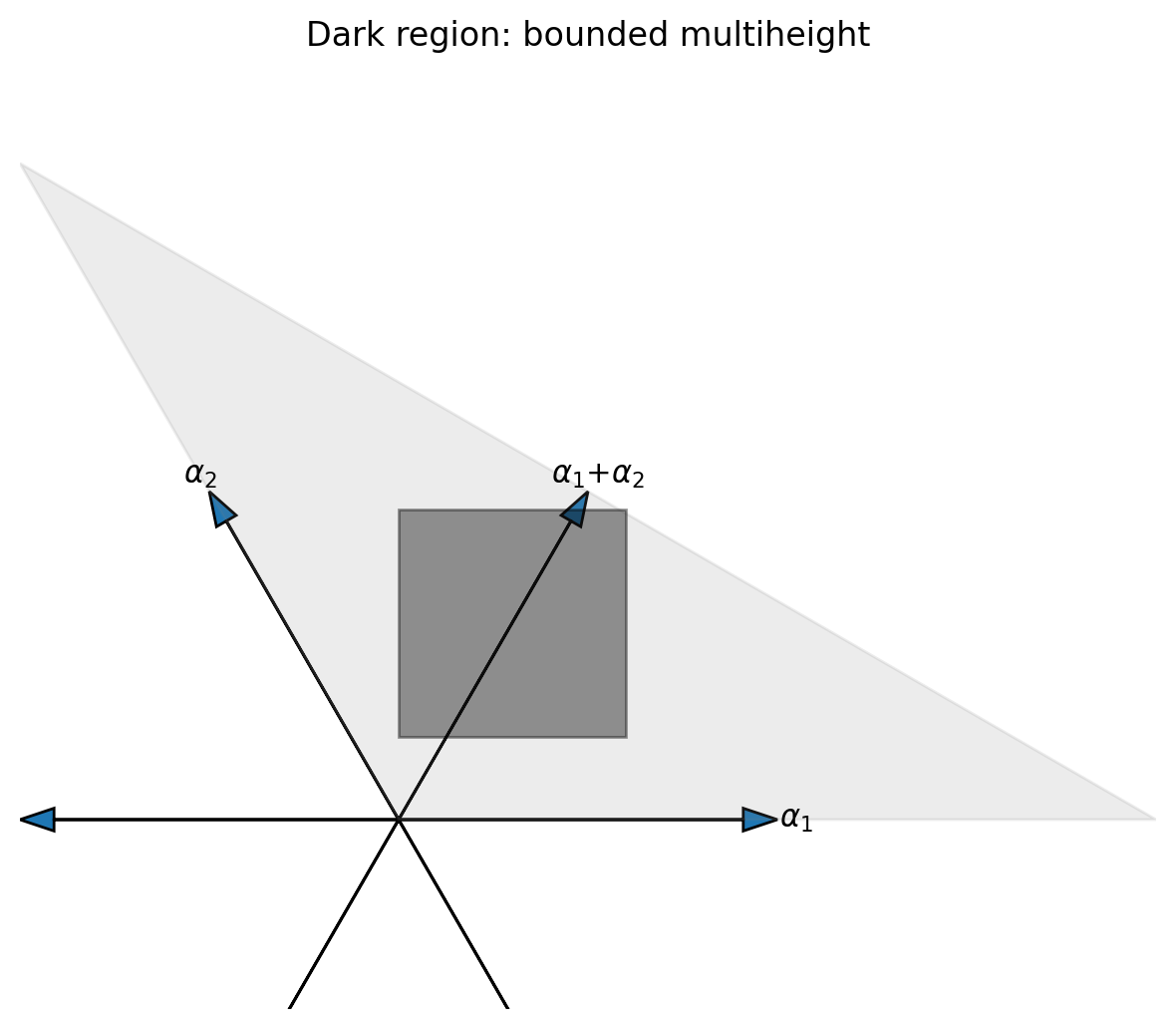}
		\end{minipage}
		\caption{\small The complete flag variety under $SL_3$}
	\end{figure}
	
	One issue is that for \( \varepsilon>0 \), the \( \nu \)-measure of the set of elements \( y \in \mathcal{D}_t \) satisfying \( \left\lfloor y \right\rfloor \geq \varepsilon t \) is not exponentially negligible compared to \( \nu(\mathcal{D}_t) \), unless \( \dim \mathfrak{a}_\theta = 1 \).
	This is the reason why it is more convenient to restrict the height to a moving compact subset of the form
	\[
		\mathcal{D}_t = tu + \mathcal{D}_0, \qquad\mbox{with}\ u \in \mathring{C}_{\mathrm{eff}}^\vee, 
\]
as in Theorem~\ref{th:gldg}. 
In this context, using notation from the proof of that theorem, it is not difficult to check that the weak counting statement
\[
		\langle \psi, F_t \rangle
	= \alpha_{0,c} \cdot \mathrm{m}_{G/L}(\mathcal{R}_t) \cdot \left(1+O(e^{-t\eta_1})\right)
\]
holds as soon as \( \mathrm{m}_{G/L}(\mathcal{R}_t) \) grows exponentially, which is equivalent to \( u-\tau Y \in \mathring{C}_{\mathrm{eff}}^\vee \).
It would be interesting to determine whether this condition is sufficient to ensure validity of the asymptotic equivalent for \( \mu_{x,\tau,t}(f) \).

\subsection*{Uniform local distribution}
A very natural problem in the study of local distribution of rational points on a variety is to determine the maximal zoom factor \( \tau_X  \) such that the zooming measures \( \mu_{x,\tau,t} \) equidistribute for \emph{every} point \( x \) in \( X(\mathbb{R}) \) for every \( \tau \in (0, \tau_X) \).
Note that one always has \( \tau_X \leq \beta_X \).
In the case where \( X=\mathbb{P}^n \), it is not difficult to check that \( \tau_X = 1 \).
As explained in the introduction, it was also shown in \cite{HSS1} that if \( X \) is a non-degenerate quadric hypersurface, then \( \tau_X = \tfrac{1}{2} \).
In general, one may define the \emph{essential Diophantine exponent} of a point \( x \) in \( X \) as 
\[
\beta_{\mathrm{ess}}(x)=\sup\left\{\beta\geqslant 0:\begin{aligned}
		&\exists \text{ Zariski dense sequence } (v_i)\subset X(\mathbb{Q})\\ 
		&\text{such that }\operatorname{d}(x,v_i)\leq H(v_i)^{-\beta}.
	\end{aligned}\right\}.
\]
It seems reasonable to conjecture, at least for flag varieties of rank one, that \( \tau_X \) is equal to the essential Diophantine exponent of any rational point on \( X \).
In the case of a Grassmann variety \( X = \mathrm{Gr}_{l,d} \), this should yield \( \tau_{\mathrm{Gr}_{l,d}}=\frac{1}{\min(l,d-l)} \).

\subsection*{Geometric interpretation of constants \( \kappa_X \) and \( \alpha_X \)}

In \cite[Question~4.8]{Peyre2}, Peyre gives an interpretation of the constant \( \kappa_X \) from Theorem~\ref{th:count} in terms of the Tamagawa measure on \( X \).
It would be interesting to check by a more careful analysis of our computations, in the spirit of what is done in Borovoi-Rudnick~\cite[Theorem~4.2]{Borovoi-Rudnick}, that the expression we obtain for \( \kappa_X \) is indeed equal to Peyre's conjectural value.
In a similar vein, one should express the constant \( \alpha_X \) from Theorems~\ref{th:gld} and \ref{th:gldg} in terms of arithmetic and geometric constants related to \( X \).

\section*{Acknowledgements} 
Part of this work was done and reported during the 2024 conference \enquote{Diophantine Approximation, Dynamical Systems and Related Topics} held at Tsinghua Sanya International Mathematics Forum, whose hospitality and financial support are gratefully acknowledged.
Z.H. thanks Université Sorbonne Paris Nord for hosting multiple visits for collaboration, and thanks 
Huajie Li for helpful discussions. Z. H. was partially supported by National Key R\&D Program of China No. 2024YFA1014600.
N.S. was partially supported by the joint ANR-SNF project \enquote{Equidistribution in Number Theory} (FNS no. 10.003.145 and ANR-24-CE93-0016).

\bibliographystyle{alpha}
\bibliography{bibliography}



 
\end{document}